\tikzstyle{dot}=[draw, fill =black, circle, inner sep=0pt, minimum size=2pt]
\newtheorem{theorem}{Theorem}
\newtheorem{proposition}[theorem]{Proposition}
\theoremstyle{definition}
\newcommand{\tss}[1]{\textsuperscript{#1}}
\newcommand{\ul}[1]{\underline{#1}}
\title{Cellular Structure of Wreath Product Algebras}
\author{\begin{tabular}{c}Reuben Green\thanks{Supported by EPSRC grant [EP/M508068/1]} \\ \texttt{\scriptsize rmg29@kent.ac.uk}\end{tabular}\\[1em]
\small School of Mathematics, Statistics and Actuarial Science,\\
\small University of Kent, CT2 7NF, UK}
\date{}
\begin{document}
\maketitle

\begin{abstract}
{\noindent}
We apply the method of iterated inflations to show that the wreath product of a cellular algebra with a symmetric group is cellular, and obtain
descriptions of the cell and simple modules together with a semisimplicity condition for such wreath products.
\end{abstract}

\section{Introduction}
The wreath product $G{\wr}S_n$ of a finite group $G$ with a symmetric group $S_n$ is a natural group-theoretic construction with many applications;
for example, wreath products $S_m{\wr}S_n$ of two symmetric groups are of great importance in the representation theory of the symmetric group. It is
also natural to consider the wreath product $A{\wr}S_n$ of a $k$-algebra $A$ with a symmetric group $S_n$, see for example the work of Chuang and Tan
in \cite{CHTAN}. The notion of a \textit{cellular algebra} was introduced by Graham and Lehrer in \cite{GLCA} and has since found broad application.
The question arises as to whether a cellular structure on an algebra $A$ yields a cellular structure on the algebra $A{\wr}S_n$, and in \cite{GEGO}
Geetha and Goodman showed that this is so in the case that $A$ is not only cellular but \textit{cyclic cellular}, meaning that all of the cell modules
of $A$ are cyclic \cite[Theorem 4.1]{GEGO}. Their proof is quite combinatorial in nature, and draws on the work of Dipper, James, and Mathas in
\cite{DJM} and of Murphy in \cite{MURHAAN}. However, we shall prove (section \ref{it_inf_awrsn:sec}) that $A{\wr}S_n$ is cellular for any cellular
algebra $A$, by exhibiting it as an \textit{iterated inflation} of tensor products of group algebras of symmetric groups. Iterated inflations were
originally introduced by K\"onig and Xi in \cite{KXIM}, but we shall use this concept in the form given in \cite{GPIICA}. Further, we shall obtain
a convenient graphical description of a well-known method of constructing $A{\wr}S_n$ modules (section \ref{wr_prd_alg:sec}), which we may apply to
the cell and simple modules of $A{\wr}S_n$, and we shall determine a condition for the semisimplicity of $A{\wr}S_n$.

The author is grateful to his Ph.D. supervisor, Rowena Paget, for numerous helpful conversations about this paper and its contents.

\section{Recollections and definitions}
We let $k$ be a field of characteristic $p$ ($p$ may be zero or a prime). By a \textit{$k$-algebra}, we shall mean a finite-dimensional unital
associative $k$-algebra; we shall abbreviate $\otimes_k$ to $\otimes$; all of our modules will be right modules of finite $k$-dimension. By an
\textit{anti-involution} on a $k$-algebra $A$, we mean a self-inverse $k$-linear isomorphism $a \mapsto a^\ast$ such that $(ab)^\ast = b^\ast a^\ast$
for all $a,b \in A$.

For $n$ a non-negative integer, a \emph{composition of $n$} is a tuple of non-negative integers whose sum is $n$, and if $\mu=(\mu_1,\ldots,\mu_t)$ is
a tuple of non-negative integers then we call the numbers $\mu_i$ the \emph{parts} of  $\mu$, and define $|\mu|$ to be the sum $\mu_1+\cdots+\mu_t$, so
that $\mu$ is a composition of $|\mu|$. A composition whose entries are positive and appear in non-increasing order is a \emph{partition}. Note that
$n=0$ has exactly one partition, the empty tuple, which we shall write as $()$.

\subsection{Cellular algebras}
We refer the reader to \cite{GLCA} for basic information and notation on cellular algebras. We shall refer to elements of the poset $\Lambda$ indexing
the cell modules of a cellular algebra as \textit{cell indices}, and we shall write the anti-involution on a cellular algebra $A$ as $a \mapsto a^\ast$.
Recall that to each cell index $\lambda$ we associate a finite set $M(\lambda)$, and we have a \emph{cellular basis} of $A$ whose elements are indexed
by the disjoint union of the sets $M(\lambda) \times M(\lambda)$ for $\lambda \in \Lambda$; we write the cellular basis element indexed by
$(S,T) \in M(\lambda) \times M(\lambda)$ as $C^\lambda_{S,T}$. We call the tuple $(\Lambda,M,C)$ the \emph{cellular data} of $A$ with respect to $\ast$.
Since we are using right modules we take the multiplication rule for cellular basis elements to be
\begin{equation}\label{cell_alg_mult_def:eq}
C^\lambda_{S,T}a \equiv \sum_{X \in M(\lambda)}R_a(T,X)C^\lambda_{S,X}
\end{equation}
modulo cellular basis elements of lower cell index (where $R_a(T,X) \in k$). Then the right cell module $\Delta^\lambda$ is the vector space with basis
$\{C_T : T \in M(\lambda) \}$; our form of the multiplication rule \eqref{cell_alg_mult_def:eq} means that the action of $A$ on $\Delta^\lambda$ is
\begin{equation}\label{cell_mod_def:eq}
C_Ta = \sum_{X \in M(\lambda)}R_a(T,X)C_X.
\end{equation}
 Let us recall some basic
results on
cell modules, see \cite[sections 2 and 3]{GLCA}. Indeed, each cell module is equipped with a bilinear form, whose radical is either the whole cell
module or else its unique maximal $A$-submodule; we shall call these bilinear forms the \textit{cell forms} and their radicals the \emph{cell
radicals}. We let $\Lambda_0$ be the set of $\lambda \in \Lambda$ such that the cell radical of $\Delta^\lambda$ does not equal $\Delta^\lambda$, and
for $\lambda \in \Lambda_0$ we let $L^\lambda$ be the quotient of $\Delta^\lambda$ by its cell radical; thus $L^\lambda$ is a simple $A$-module, and
the modules $L^\lambda$ for $\lambda \in \Lambda_0$ are in fact a complete list of all the right $A$-simples up to isomorphism without redundancy.

\subsection{The symmetric group}
We let $S_n$ denote the symmetric group on the set $\{1,2,\ldots,n\}$, and we take $S_n$ to act on the right, so that the product $\sigma\pi$ of
permutations is calculated by first applying $\sigma$ and then applying $\pi$; thus we write permutations to the right of their arguments. We shall
find it convenient to represent permutations via \textit{permutation diagrams}; for example, we represent $(1,2,3)(5,7) \in S_7$ by the diagram
\[\begin{tikzpicture}[line width=0.5pt]
	%top row
	\node[dot] at (1,2){};
	\node[dot] at (2,2){};
	\node[dot] at (3,2){};
	\node[dot] at (4,2){};
	\node[dot] at (5,2){};
	\node[dot] at (6,2){};
	\node[dot] at (7,2){};
	%bottom row
	\node[dot] at (1,1){};
	\node[dot] at (2,1){};
	\node[dot] at (3,1){};
	\node[dot] at (4,1){};
	\node[dot] at (5,1){};
	\node[dot] at (6,1){};
	\node[dot] at (7,1){};
	%lines
	\draw (1,2) to (2,1);
	\draw (2,2) to (3,1);
	\draw (3,2) to (1,1);
	\draw (4,2) to (4,1);
	\draw (5,2) to[out=270] (7,1);
	\draw (6,2) to (6,1);
	\draw plot [smooth] coordinates {(7,2) (5.8,1.6) (5,1)};
\end{tikzpicture},\]
where the $i$\tss{th} \textit{node} on the top row is connected by a \textit{string} to the $(i)\sigma$\tss{th} node on the bottom row.
To calculate the product $\sigma\pi$ in $S_n$ using permutation diagrams, we connect the diagram for $\sigma$ above the diagram for $\pi$, and then
simplify the resulting diagram to yield the permutation diagram of $\sigma\pi$. For $\mu$ a composition of $n$, we write $S_\mu$ for the Young subgroup
of $S_n$ associated to $\mu$.

The group algebra $kS_n$ is known to be a cellular algebra \cite[Theorem 3.20]{MATHBK}, with respect to the anti-involution $\ast$ defined by setting
$\sigma^\ast = \sigma^{-1}$ for $\sigma \in S_n$, and a tuple of cellular data including the partially ordered set $\Lambda_n$
consisting of all partitions of $n$ endowed with the dominance order $\trianglelefteq$. We shall not need the details of the cellular basis occurring in
this structure, but we note that for $\lambda \in \Lambda_n$, the right cell module associated to $\lambda$ by this structure, which we shall denote by
$S^\lambda$, is the (contragradient) dual of the right Specht module defined by James in \cite{JAMSN}\footnote{
See \cite{MATHBK}, ``Warning'' on p.38 and ``Note 2'' on page 54, but note that the original published text incorrectly states that the cell module
obtained is the dual of the right James Specht module associated to the \emph{conjugate} of $\lambda$; see the correction to the Warning in the
author's errata.
}.
Further, the simple modules are indexed by \emph{$p$-restricted partitions}. If $p = 0$ then all partitions are considered $p$-restricted, while if
$p>0$ then a partition is $p$-restricted if the difference between any two consecutive parts is less than $p$. Note that $()$ is $p$-restricted for
all $p\geqslant 0$. For $\lambda$ $p$-restricted, we denote the associated simple module by $D^\lambda$ \cite[Theorem 3.43]{MATHBK}.

The following result may easily be proved by directly verifying the axioms for a cellular algebra; in fact, it is merely a special case of the
general result that a tensor product of cellular algebras is cellular, see for example section 3.2 of \cite{GEGO}.

\begin{proposition}\label{ten_prod_sn:prop}
Let $n_1,\ldots,n_r$ be non-negative integers. Then the group algebra $k(S_{n_1}\times\cdots\times S_{n_r})$ is a cellular algebra  with respect to
the map given by $(\sigma_1,\ldots,\sigma_r) \longmapsto (\sigma^{-1}_1,\ldots,\sigma^{-1}_r)$ for $\sigma_i \in S_{n_i}$ and a cellular structure
where the poset of cell indices is $\Lambda_{n_1}\times\cdots\times\Lambda_{n_r}$ with the order where
$(\lambda_1,\ldots,\lambda_r) \geqslant (\mu_1,\ldots,\mu_r)$ means  $\lambda_i \trianglerighteq \mu_i$ for all $i$; the cell module associated to
$(\lambda_1,\ldots,\lambda_r)$ is $S^{\lambda_1}\otimes\cdots\otimes S^{\lambda_r}$ with the action
\[
(x_1 \otimes\cdots\otimes x_r)\cdot(\sigma_1,\ldots,\sigma_r) = (x_1\sigma_1)\otimes\cdots\otimes(x_r\sigma_r)
\]
for $x_i \in S^{\lambda_i}$, $\sigma_i \in S_{n_i}$, and the cell form is given on pure tensors by
\[ \langle x_1\otimes\cdots\otimes x_r,y_1\otimes\cdots\otimes y_r\rangle = \langle x_1,y_1 \rangle\cdots\langle x_r,y_r \rangle \]
where each bilinear form on the right hand side is the appropriate cell form of some $S^{\lambda_i}$.
\end{proposition}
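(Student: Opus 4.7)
My plan is to prove the proposition by induction on $r$, reducing at each step to the statement that the tensor product of two cellular algebras is cellular. The base case $r=1$ is exactly the cellularity of $kS_n$ cited from \cite[Theorem 3.20]{MATHBK}. For the inductive step I identify $k(S_{n_1}\times\cdots\times S_{n_r})$ with $k(S_{n_1}\times\cdots\times S_{n_{r-1}})\otimes kS_{n_r}$ via $(\sigma_1,\ldots,\sigma_r) \mapsto (\sigma_1,\ldots,\sigma_{r-1})\otimes\sigma_r$, so it suffices to establish the result for a tensor product $A\otimes B$ of two cellular algebras with cellular data $(\Lambda_A,M_A,C^A)$ and $(\Lambda_B,M_B,C^B)$, and then to observe by induction that the cellular data I construct matches the description in the proposition.

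The candidate cellular data for $A\otimes B$ is the obvious one: take $\Lambda=\Lambda_A\times\Lambda_B$ with the product order, $M(\lambda,\mu) = M_A(\lambda)\times M_B(\mu)$, anti-involution $(a\otimes b)^\ast = a^\ast\otimes b^\ast$, and basis elements $C^{(\lambda,\mu)}_{(S,U),(T,V)} = C^{A,\lambda}_{S,T}\otimes C^{B,\mu}_{U,V}$. Most of the cellular axioms are immediate: the indexing is a bijection onto a basis because the tensor product of bases is a basis; the anti-involution squares to the identity and reverses multiplication because each factor does; and it permutes basis elements correctly because $(C^{A,\lambda}_{S,T})^\ast = C^{A,\lambda}_{T,S}$ and similarly for $B$. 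The cell module and cell form descriptions then follow from unpacking these definitions together with the corresponding data for each tensor factor, and when this construction is iterated $r$ times the resulting data is exactly that stated in the proposition (with $S^{\lambda_1}\otimes\cdots\otimes S^{\lambda_r}$ as cell module and product of cell forms).

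The main point to check, and essentially the only one that needs bookkeeping, is the multiplication rule \eqref{cell_alg_mult_def:eq} and its interaction with the product order. Given $a\otimes b \in A\otimes B$, I compute
\[
(C^{A,\lambda}_{S,T}\otimes C^{B,\mu}_{U,V})(a\otimes b) = (C^{A,\lambda}_{S,T}a)\otimes (C^{B,\mu}_{U,V}b),
\]
expand each factor using \eqref{cell_alg_mult_def:eq}, and observe that the cross terms coming from ``lower'' corrections in either factor lie at an index of the form $(\lambda',\mu)$, $(\lambda,\mu')$, or $(\lambda',\mu')$ with $\lambda'<\lambda$ or $\mu'<\mu$; each of these is strictly below $(\lambda,\mu)$ in the product order, so they collectively lie in the ideal spanned by basis elements of strictly lower cell index, as required. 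The coefficient $R_{a\otimes b}((T,V),(X,Y))$ is then $R_a(T,X)R_b(V,Y)$, which depends only on the second coordinate of the left-hand basis element, confirming the axiom. This verification is the only nontrivial step; the remaining routine checks and the iteration to $r$ factors complete the proof.
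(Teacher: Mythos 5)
Your proof is correct and follows essentially the route the paper itself indicates: the paper offers no detailed argument for this proposition, merely remarking that it may be proved by directly verifying the cellular axioms and that it is a special case of the general fact that a tensor product of cellular algebras is cellular (citing section 3.2 of \cite{GEGO}). Your reduction to the two-factor case and your check that all error terms land at indices strictly below $(\lambda,\mu)$ in the product order supply exactly the details the paper leaves to the reader.
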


Let $\sigma \in S_n$. Then an \emph{inversion} of $\sigma$ is a transposition $(i,j)$ in $S_n$ such that $1 \leq i < j \leq n$ but 
$(i)\sigma > (j)\sigma$, and
the \emph{Coxeter length} of $\sigma$ is defined to be the total number of inversions of $\sigma$; we shall simply call this the \textit{length} of
$\sigma$. It is well-known that
if $\mu$ is a composition of $n$, then each right coset $S_\mu\sigma$ of $S_\mu$ contains a unique element of minimal length, and further that if
$\mu = (\mu_1,\ldots,\mu_r)$, then for any given right $S_\mu$-coset, the element of minimal length is the unique element $\gamma$ of the coset
such that in the sequence $(1)\gamma^{-1},\ldots,(n)\gamma^{-1}$, the elements $1,\ldots,\mu_1$ occur in increasing order, as do the elements
$\mu_1+1,\ldots,\mu_1+\mu_2$, the elements $\mu_1+\mu_2+1,\ldots,\mu_1+\mu_2+\mu_3$, and so on. Equivalently, an element $\sigma$ of $S_n$ is
of minimal length in its coset $S_\mu\sigma$ if and only if, in its permutation diagram, the strings attached to the first $\mu_1$ nodes on the top row
do not cross each other, the strings attached to the next $\mu_2$ nodes on the top row do not cross each other, and so on. For example, the permutation
whose diagram appears in the diagram \eqref{mod_diag_eg:diag} below is of minimal length in its $S_\mu$ coset for $\mu=(3,2,3)$. For any $\mu$ a
composition of $n$, we define $\mathcal{R}_\mu$ to be the unique system of minimal-length right $S_\mu$-coset representatives in $S_n$.

\subsection{Iterated inflation of cellular algebras}\label{it_inf:sec}
Iterated inflations of cellular algebras were first introduced by K\"onig and Xi in \cite{KXIM}, but we shall use them as presented in \cite{GPIICA}.
We shall now summarise the content of \cite{GPIICA}; note however that we give the form using right cell modules, rather than the left cell modules
used in \cite{GPIICA}.

Let $A$ be a $k$-algebra, with an anti-involution $\ast$. Suppose that we have, up to isomorphism of $k$-vector spaces, a decomposition
\[ A \cong \bigoplus_{\mu \in I}V_\mu  \otimes B_\mu  \otimes V_\mu  \]
of $A$, where $I$ is a finite partially ordered set, each $V_\mu $ is a $k$-vector space, and each $B_\mu $ is a cellular algebra over $k$
with respect to an anti-involution $\ast$ and cellular data $\left(\Lambda_\mu ,M_\mu ,C\right)$. We shall henceforth consider $A$ to be identified with
this direct sum of tensor products, and we shall speak of the subspace $V_\mu  \otimes B_\mu  \otimes V_\mu $ as the $\mu $-th \textit{layer} of $A$.
Suppose that for each $\mu  \in I$, we have a basis $\mathcal{V}_\mu $ for $V_\mu $ and a basis $\mathcal{B}_\mu $ for $B_\mu $. Let $\mathcal{A}$ be
the basis of $A$ consisting of all elements $u \otimes b \otimes w$ for all $u,w \in \mathcal{V}_\mu $ and all $b \in \mathcal{B}_\mu $, as $\mu$
ranges over $I$. Suppose that for each $\mu \in I$, we have for any $u,w \in \mathcal{V}_\mu $ and any $b \in \mathcal{B}_\mu $ that
\begin{equation} \label{it_inf_anti_gen:eq}
(u \otimes b \otimes w)^\ast = w \otimes b^\ast \otimes u,
\end{equation}
and suppose further that for any $\mu \in I$ we have maps $\phi_\mu : \mathcal{V}_\mu  \times \mathcal{A} \rightarrow V_\mu $ and
$\theta_\mu : \mathcal{V}_\mu  \times \mathcal{A} \rightarrow B_\mu $ such that for any $u,w \in \mathcal{V}_\mu $ and any $b \in \mathcal{B}_\mu $, we
have for any $a \in \mathcal{A}$ that
\begin{equation} \label{it_inf_mult_gen:eq}
(u \otimes b \otimes w) \cdot a \equiv u \otimes b\,\theta_\mu(w,a) \otimes \phi_\mu(w,a) \quad \mathrm{mod\;} J(<\mu ),
\end{equation}
where $J(<\mu ) = \bigoplus_{\alpha < \mu }V_\alpha  \otimes B_\alpha  \otimes V_\alpha $. Then by \cite[Theorem 1]{GPIICA}, $A$ is cellular with
respect to $\ast$ and the cellular data $(\Lambda,M,C)$, where $\Lambda$ is the set
$\{(\mu,\lambda) : \mu \in I \text{ and } \lambda \in \Lambda_\mu \}$ with the lexicographic order,
$M(\mu,\lambda)$ is $\mathcal{V}_\mu \times M_\mu(\lambda)$, and $C^{(\mu,\lambda)}_{(x,X),(y,Y)} = x \otimes C^{\lambda}_{X,Y} \otimes y$.

Further by \cite[Proposition 2]{GPIICA}, for each $\mu \in I$ there is a unique $B_\mu$-valued $k$-bilinear form $\psi_\mu$ on $V_\mu$ such that for any
$u,w,x,y \in V_\mu$ and $b,c \in B_\mu$ we have $\psi_\mu(y,u) = \psi_\mu(u,y)^\ast$ and
\begin{equation}\label{mult_in_a_layer:eq}
	(x \otimes c \otimes y)(u \otimes b \otimes w) \equiv x \otimes c\,\psi_\mu(y,u) b \otimes w   \quad \mathrm{mod\;} J(<\mu).
\end{equation}

Finally (see \cite[Proposition 3]{GPIICA}), let $(\mu,\lambda) \in \Lambda$, and let $\Delta^\lambda$ be the right cell module of $B_i$ corresponding to
$\lambda$. The right cell module $\Delta^{(\mu,\lambda)}$ of $A$ may be obtained by equipping $\Delta^\lambda \otimes V_\mu$ with the action given, for
$a \in \mathcal{A}$, $x \in \mathcal{V}_\mu$ and $z \in \Delta^\lambda$, by $(z \otimes x)a = z\,\theta_\mu(x,a) \otimes \phi_\mu(x,a)$.
Moreover, if $\langle\cdot\,,\cdot\rangle$ is the cell form on $\Delta^\lambda \otimes V_\mu$ and $\langle\cdot\,,\cdot\rangle_\lambda$ is the
cell form on $\Delta^\lambda$, then for any $x,y \in V_\mu$ and any $z,v \in \Delta^\lambda$, we have
\begin{equation}\label{bilin_form:eq}
	\langle z \otimes x,\, v \otimes y \rangle = \langle z\,\psi_\mu(x,y),v \rangle_\lambda = \langle z,v\,\psi_\mu(y,x) \rangle_\lambda.
\end{equation}

\section{Wreath product algebras}\label{wr_prd_alg:sec}
We recall the notion of the wreath product of an algebra with a symmetric group from \cite{CHTAN}.
Indeed, let $A$ be a finite-dimensional unital associative $k$-algebra. Consider the $k$-vector space $kS_n\otimes A^{\otimes n}$, and further let us
write a pure tensor $x\otimes a_1\otimes a_2\otimes\cdots\otimes a_n$ in this vector space as $(x\,; a_1, a_2, \ldots, a_n)$. Then we have a
well-defined multiplication which is given by
\begin{multline*}
	(\sigma; a_1,a_2,\ldots,a_n)(\pi; b_1,b_2,\ldots,b_n) = \\
	(\sigma\pi; a_{(1)\pi^{-1}}b_1,a_{(2)\pi^{-1}}b_2,\ldots,a_{(n)\pi^{-1}}b_n)
\end{multline*}
for $\sigma,\pi \in S_n$ and $a_i,b_i \in A$; we define the \emph{wreath product} $A{\wr}S_n$ of $A$ and $S_n$ to be the unital associative
$k$-algebra so obtained.

We assume that the reader is familiar with the notion of \emph{diagram algebras}, for example the Brauer or Temperley-Lieb algebras. We can consider
$A{\wr}S_n$ to be a kind of diagram algebra. Indeed, we may represent a pure tensor
$(\sigma; a_1,a_2,\ldots,a_n)$ in $A{\wr}S_n$, where $\sigma \in S_n$ and $a_i \in A$, by a diagram obtained by drawing the permutation diagram
associated to $\sigma$, with the nodes of the bottom row replaced by the elements $a_i$. For example, if $n = 5$ and $\sigma = (1,4,3,5,2)$, then we
represent the element $(\sigma; a_1,a_2,a_3,a_4,a_5)$ by
\[\begin{tikzpicture}[line width=0.5pt]
	%top row
	\node[dot] at (1,2){};
	\node[dot] at (2,2){};
	\node[dot] at (3,2){};
	\node[dot] at (4,2){};
	\node[dot] at (5,2){};
	%bottom row
	\node at (1,1){$a_1$};
	\node at (2,1){$a_2$};
	\node at (3,1){$a_3$};
	\node at (4,1){$a_4$};
	\node at (5,1){$a_5$};
	%lines "top to bottom"
	\draw (1,2) to (4,1.15);
	\draw (2,2) to (1,1.15);
	\draw (3,2) to (5,1.15);
	\draw (4,2) to (3,1.15);
	\draw (5,2) to (2,1.15);
	%\draw plot [smooth] coordinates {(7,2) (5.8,1.6) (5,1)};
\end{tikzpicture}.\]
Such diagrams are useful for computing products, as we now show by an example. Indeed, keep $n = 5$ and $\sigma = (1,4,3,5,2)$,
and let $\pi = (1,3,5)(2,4)$. Then to compute the product
$(\sigma; a_1,a_2,a_3,a_4,a_5)(\pi; b_1,b_2,b_3,b_4,b_5)$,
we draw the diagram corresponding to the first factor above the one corresponding to the second factor, to obtain
\[\begin{tikzpicture}[line width=0.5pt]
	%top row
	\node[dot] at (1,3){};
	\node[dot] at (2,3){};
	\node[dot] at (3,3){};
	\node[dot] at (4,3){};
	\node[dot] at (5,3){};
	%middle row
	\node at (1,2){$a_1$};
	\node at (2,2){$a_2$};
	\node at (3,2){$a_3$};
	\node at (4,2){$a_4$};
	\node at (5,2){$a_5$};
	%bottom row
	\node at (1,1){$b_1$};
	\node at (2,1){$b_2$};
	\node at (3,1){$b_3$};
	\node at (4,1){$b_4$};
	\node at (5,1){$b_5$};
	%lines "top to middle"
	\draw (1,3) to (4,2.15);
	\draw (2,3) to (1,2.15);
	\draw (3,3) to (5,2.15);
	\draw (4,3) to (3,2.15);
	\draw (5,3) to (2,2.15);
	%lines "middle to bottom"
	\draw (1,1.85) to (2.8,1.15);
	\draw (2,1.85) to (3.8,1.15);
	\draw (3,1.85) to (4.8,1.15);
	\draw (4,1.85) to (2,1.15);
	\draw plot [smooth] coordinates {(5,1.85) (4.7,1.6) (3.2,1.25) (2.3,1.6) (1,1.15)};
\end{tikzpicture}\]
and we then slide each $a_i$ down its string to meet some $b_j$, and then resolve the two connected permutation diagrams into a single
diagram, to obtain
\[\begin{tikzpicture}[line width=0.5pt]
	%top row
	\node[dot] at (1,2){};
	\node[dot] at (2,2){};
	\node[dot] at (3,2){};
	\node[dot] at (4,2){};
	\node[dot] at (5,2){};
	%bottom row
	\node at (1,1){$a_5b_1$};
	\node at (2,1){$a_4b_2$};
	\node at (3,1){$a_1b_3$};
	\node at (4,1){$a_2b_4$};
	\node at (5,1){$a_3b_5$};
	%lines "top to bottom"
	\draw (1,2) to (2,1.16);
	\draw (2,2) to (3,1.16);
	\draw (3,2) to (1,1.16);
	\draw (4,2) to (5,1.16);
	\draw (5,2) to (4,1.16);
\end{tikzpicture}\]
which corresponds to the element $\bigl((1,2,3)(4,5); a_5b_1,a_4b_2,a_1b_3,a_2b_4,a_3b_5\bigr)$, which is indeed the product of the two
elements we started with.

Note that, unlike the usual diagram basis of the Brauer or Temperley-Lieb algebras, the set of all such diagrams is not a basis of $A{\wr}S_n$. A basis
of such diagrams can be formed by fixing a basis $\mathcal{C}$ of $A$, and then taking the set of all elements $(\sigma; a_1, \ldots, a_n)$ for
$\sigma \in S_n$ and $a_i \in \mathcal{C}$; however the product of two such basis elements will not in general be a scalar multiple of another basis
element as is the case for the diagram basis of the Brauer or Temperley-Lieb algebras.

It is easy to show that there is a well-defined anti-involution $\ast$ on $A{\wr}S_n$ given by
\begin{equation}\label{anti_inv_def:eq}
(\sigma;a_1,\ldots,a_n)^\ast = \bigl(\sigma^{-1}\,;\,a_{(1)\sigma}^\ast,\ldots,a_{(n)\sigma}^\ast \bigr),
\end{equation}
where $\sigma \in S_n$ and $a_1,\ldots,a_n \in A$. In terms of diagrams, this map corresponds to the operation of taking a diagram, flipping it about
the horizontal line half-way between its two rows of nodes (so that the elements $a_i$ lie on the \emph{top} row), replacing each element $a_i$ with
its image $a_i^\ast$ under the anti-involution on $A$, and then sliding each element $a_i^\ast$ to the bottom of its string.

Now there is a standard method of constructing modules for $A{\wr}S_n$ from $A$-modules and symmetric group modules; see for example Section 3 of
\cite{CHTAN}. Indeed, let $\mu$ be an $r$-part composition of $n$, $X_1,\ldots,X_r$ be $A$-modules, and for each $i=1,\ldots,r$ let $Y_i$ be a
$kS_{\mu_i}$ module. We write $A{\wr}S_\mu$ for the subalgebra of $A{\wr}S_n$ spanned by all elements $(\sigma;a_1,\ldots,a_n)$ where $a_i \in A$ and
$\sigma \in S_\mu$. Then $X^{\otimes \mu_1}_1\otimes\cdots\otimes X^{\otimes \mu_r}_r\otimes Y_1 \otimes \cdots \otimes Y_r$
is naturally a $A{\wr}S_\mu$-module via the action
\begin{multline*}
(x_1 \otimes \cdots \otimes x_n \otimes y_1 \otimes \cdots \otimes y_r)(\sigma;a_1,\ldots,a_n) = \\
x_{(1)\sigma^{-1}}a_1  \otimes \cdots \otimes x_{(n)\sigma^{-1}}a_n\otimes y_1\sigma_1 \otimes \cdots \otimes y_r\sigma_r,
\end{multline*}
where the elements $\sigma_i \in S_{\mu_i}$ are such that under the natural identification of $S_\mu$ with ${S_{\mu_1}\times\cdots\times S_{\mu_r}}$,
$\sigma$ is identified with $(\sigma_1,\ldots,\sigma_r)$. Then inducing from $A{\wr}S_\mu$ to $A{\wr}S_n$ (that is, applying the functor
$-\otimes_{A{\wr}S_\mu}A{\wr}S_n$) yields a module which we may easily see is isomorphic as a $k$-vector space to
\begin{equation}\label{vsp_cell_mod_std:eq}
X^{\otimes \mu_1}_1\otimes\cdots\otimes X^{\otimes \mu_r}_r\otimes Y_1 \otimes \cdots \otimes Y_r\otimes k\mathcal{R}_\mu,
\end{equation}
where $k\mathcal{R}_\mu$ is the vector space on the basis $\mathcal{R}_\mu$ of minimal-length coset representatives, with the action given by
\begin{multline}\label{wr_mod_full_action:eq}
(x_1 \otimes \cdots \otimes x_n \otimes y_1 \otimes \cdots \otimes y_r \otimes \gamma)(\sigma;a_1,\ldots,a_n) = \\
x_{(1)\theta^{-1}}a_{(1)\zeta} \otimes \cdots \otimes x_{(n)\theta^{-1}}a_{(n)\zeta}\otimes
y_1\theta_1 \otimes \cdots \otimes y_r\theta_r \otimes \zeta,
\end{multline}
where $\gamma \in \mathcal{R}_\mu$, and $\zeta \in \mathcal{R}_\mu$ and $\theta \in S_\mu$ are such that $\gamma\sigma = \theta\zeta$. Letting
$\ul{X}$ be the tuple $(X_1,\ldots,X_r)$ and $\ul{Y}$ be the tuple $(Y_1,\ldots,Y_r)$, we denote the module so obtained by $\Theta^\mu(\ul{X},\ul{Y})$.

We now introduce a diagrammatic representation for certain pure tensors in the module $\Theta^\mu(\ul{X},\ul{Y})$ which provides a very convenient and
intuitive understanding of the action of $A{\wr}S_n$. Indeed, let us take a pure tensor
$x_1 \otimes \cdots \otimes x_n \otimes y_1 \otimes \cdots \otimes y_r \otimes \gamma$
in \eqref{vsp_cell_mod_std:eq}, where $\gamma \in \mathcal{R}_\mu$. We represent this element by taking the permutation diagram of $\gamma$, labelling
the nodes on its lower row from left to right with the elements $x_{(1)\gamma^{-1}},\ldots,x_{(n)\gamma^{-1}}$, then linking together the first $\mu_1$
nodes on the top
row and labelling them with $y_1$, linking together the next $\mu_2$ nodes on the top row and labelling the linked nodes with $y_2$, and so on.
For example, take $n=8$, $r=3$, $\mu = (3,2,3)$, and $\gamma = (2,3,6)(5,8,7)$ ($\gamma$ may be seen to be an element of $\mathcal{R}_\mu$ from its
permutation diagram in \eqref{mod_diag_eg:diag}, since the strings associated to each $y_i$ do not cross each other). We then represent the element
\[x_1 \otimes x_2 \otimes x_3 \otimes x_4 \otimes x_5 \otimes x_6 \otimes x_7 \otimes x_8 \otimes y_1 \otimes y_2  \otimes y_3 \otimes \gamma\]
by the diagram
\begin{equation}\label{mod_diag_eg:diag}\begin{tikzpicture}[line width=0.5pt]
	%above top row
	\node at (2,2.4){$y_1$};
	\node at (4.5,2.4){$y_2$};
	\node at (7,2.4){$y_3$};
	%top row
	\draw (1,2.2) to (3,2.2);
		\node[dot] at (1,2.2){};
		\node[dot] at (2,2.2){};
		\node[dot] at (3,2.2){};
	\draw (4,2.2) to (5,2.2);
		\node[dot] at (4,2.2){};
		\node[dot] at (5,2.2){};
	\draw (6,2.2) to (8,2.2);
		\node[dot] at (6,2.2){};
		\node[dot] at (7,2.2){};
		\node[dot] at (8,2.2){};
	%bottom row
	\node at (1,1){$x_1$};
	\node at (2,1){$x_6$};
	\node at (3,1){$x_2$};
	\node at (4,1){$x_4$};
	\node at (5,1){$x_7$};
	\node at (6,1){$x_3$};
	\node at (7,1){$x_8$};
	\node at (8,1){$x_5$};
	%lines top to bottom
	\draw (1,2.2) to (1,1.16);
	\draw (2,2.2) to (3,1.16);
	\draw (3,2.2) to (6,1.16);
	\draw (4,2.2) to (4,1.16);
	\draw (5,2.2) to (8,1.16);
	\draw (6,2.2) to (2,1.16);
	\draw (7,2.2) to (5,1.16);
	\draw (8,2.2) to (7,1.16);
\end{tikzpicture}.\end{equation}
Note that each $x_i$ is connected to the $i$\tss{th} node on the top row. Note also that for each $i=1,2,3$, the elements of $X_i$
are attached to the strings associated to $y_i$.  We thus identify $\Theta^\mu(\ul{X},\ul{Y})$ with the $k$-vector space spanned by diagrams
consisting of the permutation diagram of some element of $\mathcal{R}_\mu$ where (as in \eqref{mod_diag_eg:diag}) for each $i=1,\ldots,r$, the
$(\mu_1+\cdots+\mu_{i-1}+1)$\tss{th} to $(\mu_1+\cdots+\mu_i)$\tss{th} nodes are connected to form a single block which is labelled by an element of
$Y_i$, and where each node on the bottom row is replaced with an element of some $X_j$ such that each top-row node in the $i$\tss{th} block is
connected to an element of $X_i$ on the bottom row. We note that under this identification, the diagram in $\Theta^\mu(\ul{X},\ul{Y})$ whose top row
has labels $y_1$ to $y_r$, whose bottom row has labels $u_1$ to $u_n$, and whose underlying permutation diagram is that of
$\gamma \in \mathcal{R}_\mu$ represents the pure tensor
$u_{(1)\gamma}\otimes\cdots\otimes u_{(n)\gamma}\otimes y_1\otimes\cdots\otimes y_r \otimes\gamma$.
Further note that the set of all such diagrams is not linearly independent in
$\Theta^\mu(\ul{X},\ul{Y})$, and so they form a spanning set rather than a basis.

This diagram representation of $\Theta^\mu(\ul{X},\ul{Y})$ affords an intuitive realisation of the action of $A{\wr}S_n$, and we illustrate this by an
example. Indeed, keeping $n=8$, $r=3$, $\mu = (3,2,3)$ as above, let us consider the diagram
\begin{equation}\label{elt_of_Theta_mu_X_Y:diag}\begin{tikzpicture}[line width=0.5pt]
	%above top row
	\node at (2,2.4){$y_1$};
	\node at (4.5,2.4){$y_2$};
	\node at (7,2.4){$y_3$};
	%top row
	\draw (1,2.2) to (3,2.2);
		\node[dot] at (1,2.2){};
		\node[dot] at (2,2.2){};
		\node[dot] at (3,2.2){};
	\draw (4,2.2) to (5,2.2);
		\node[dot] at (4,2.2){};
		\node[dot] at (5,2.2){};
	\draw (6,2.2) to (8,2.2);
		\node[dot] at (6,2.2){};
		\node[dot] at (7,2.2){};
		\node[dot] at (8,2.2){};
	%bottom row
	\node at (1,1){$u_1$};
	\node at (2,1){$u_2$};
	\node at (3,1){$u_3$};
	\node at (4,1){$u_4$};
	\node at (5,1){$u_5$};
	\node at (6,1){$u_6$};
	\node at (7,1){$u_7$};
	\node at (8,1){$u_8$};
	%lines top to bottom
	\draw (1,2.2) to (3,1.16);
	\draw (2,2.2) to (6,1.16);
	\draw (3,2.2) to (8,1.16);
	\draw (4,2.2) to (1,1.16);
	\draw (5,2.2) to (5,1.16);
	\draw (6,2.2) to (2,1.16);
	\draw (7,2.2) to (4,1.16);
	\draw (8,2.2) to (7,1.16);
\end{tikzpicture}\end{equation}
in $\Theta^\mu(\ul{X},\ul{Y})$; note that this diagram represents the pure tensor
\begin{multline}\label{diag_eg_elt_of_diag_module:eq}
u_3 \otimes u_6 \otimes u_8 \otimes u_1 \otimes u_5 \otimes u_2 \otimes u_4 \otimes u_7 \otimes\\
 y_1 \otimes y_2  \otimes y_3 \otimes (1,3,8,7,4)(2,6).\qquad
\end{multline}
Now take the element
\begin{equation}\label{diag_eg_elt_of_A_wr_Sn:eq}
\bigl((1,2,3)(4,6,8,7,5);a_1,a_2,a_3,a_4,a_5,a_6,a_7,a_8\bigr)
\end{equation}
of $A{\wr}S_8$, which is represented by the diagram
\begin{equation}\label{elt_of_awrsn_acting:diag}\begin{tikzpicture}[line width=0.5pt]
	%top row
	\node[dot] at (1,2){};
	\node[dot] at (2,2){};
	\node[dot] at (3,2){};
	\node[dot] at (4,2){};
	\node[dot] at (5,2){};
	\node[dot] at (6,2){};
	\node[dot] at (7,2){};
	\node[dot] at (8,2){};
	%bottom row
	\node at (1,1){$a_1$};
	\node at (2,1){$a_2$};
	\node at (3,1){$a_3$};
	\node at (4,1){$a_4$};
	\node at (5,1){$a_5$};
	\node at (6,1){$a_6$};
	\node at (7,1){$a_7$};
	\node at (8,1){$a_8$};
	%lines "top to bottom"
	\draw (1,2) to (2,1.2);
	\draw (2,2) to (3,1.2);
	\draw (3,2) to (1,1.2);
	\draw (4,2) to (6,1.2);
	\draw (5,2) to (4,1.2);
	\draw (6,2) to (8,1.2);
	\draw (7,2) to (5,1.2);
	\draw (8,2) to (7,1.2);
\end{tikzpicture}.\end{equation}
The action of the element \eqref{elt_of_awrsn_acting:diag} on \eqref{elt_of_Theta_mu_X_Y:diag} is calculated as follows: we connect the diagram
\eqref{elt_of_awrsn_acting:diag} below the diagram \eqref{elt_of_Theta_mu_X_Y:diag} to get
\[\begin{tikzpicture}[line width=0.5pt]
	%above top row
	\node at (2,3.4){$y_1$};
	\node at (4.5,3.4){$y_2$};
	\node at (7,3.4){$y_3$};
	%top row
	\draw (1,3.2) to (3,3.2);
		\node[dot] at (1,3.2){};
		\node[dot] at (2,3.2){};
		\node[dot] at (3,3.2){};
	\draw (4,3.2) to (5,3.2);
		\node[dot] at (4,3.2){};
		\node[dot] at (5,3.2){};
	\draw (6,3.2) to (8,3.2);
		\node[dot] at (6,3.2){};
		\node[dot] at (7,3.2){};
		\node[dot] at (8,3.2){};
	%middle row
	\node at (1,2){$u_1$};
	\node at (2,2){$u_2$};
	\node at (3,2){$u_3$};
	\node at (4,2){$u_4$};
	\node at (5,2){$u_5$};
	\node at (6,2){$u_6$};
	\node at (7,2){$u_7$};
	\node at (8,2){$u_8$};
	%lines top to middle
	\draw (1,3.2) to (3,2.15);
	\draw (2,3.2) to (6,2.15);
	\draw (3,3.2) to (8,2.15);
	\draw (4,3.2) to (1,2.15);
	\draw (5,3.2) to (5,2.15);
	\draw (6,3.2) to (2,2.15);
	\draw (7,3.2) to (4,2.15);
	\draw (8,3.2) to (7,2.15);
	%bottom row
	\node at (1,1){$a_1$};
	\node at (2,1){$a_2$};
	\node at (3,1){$a_3$};
	\node at (4,1){$a_4$};
	\node at (5,1){$a_5$};
	\node at (6,1){$a_6$};
	\node at (7,1){$a_7$};
	\node at (8,1){$a_8$};
	%lines "middle to bottom"
	\draw (1,1.8) to (2,1.2);
	\draw (2,1.8) to (3,1.2);
	\draw (3,1.8) to (1,1.2);
	\draw (4,1.8) to (6,1.2);
	\draw (5,1.8) to (4,1.2);
	\draw (6,1.8) to (8,1.2);
	\draw (7,1.8) to (5,1.2);
	\draw (8,1.8) to (7,1.2);
\end{tikzpicture}.\]
We slide each $u_i$ down its string and simplify the drawing of the resulting partition diagram, to obtain
\begin{equation}\begin{tikzpicture}[line width=0.5pt,xscale=1.2]\label{eg_cell_diag_unfact:diag}
	%top row
	\node at (2,2.5){$y_1$};
	\node at (4.5,2.5){$y_2$};
	\node at (7,2.5){$y_3$};
	%lines and nodes for top row
	\draw (1,2.2) to (3,2.2);
		\node[dot] at (1,2.2){};
		\node[dot] at (2,2.2){};
		\node[dot] at (3,2.2){};
	\draw (4,2.2) to (5,2.2);
		\node[dot] at (4,2.2){};
		\node[dot] at (5,2.2){};
	\draw (6,2.2) to (8,2.2);
		\node[dot] at (6,2.2){};
		\node[dot] at (7,2.2){};
		\node[dot] at (8,2.2){};
	%bottom row
	\node at (1,1){$u_3 a_1$};
	\node at (2,1){$u_1 a_2$};
	\node at (3,1){$u_2 a_3$};
	\node at (4,1){$u_5 a_4$};
	\node at (5,1){$u_7 a_5$};
	\node at (6,1){$u_4 a_6$};
	\node at (7,1){$u_8 a_7$};
	\node at (8,1){$u_6 a_8$};
	%lines top to bottom
	\draw (1,2.2) to (1,1.15);
	\draw plot [smooth] coordinates {(2,2.2) (2.6,1.6) (7.5,1.5) (8,1.15)};
	\draw plot [smooth] coordinates {(3,2.2) (3.6,1.8) (7.1,1.9) (7,1.15)};
	\draw (4,2.2) to (2,1.15);
	\draw plot [smooth] coordinates {(5,2.2) (4.8,2) (4,1.15)};
	\draw (6,2.2) to (3,1.15);
	\draw (7,2.2) to (6,1.15);
	\draw (8,2.2) to (5,1.15);
\end{tikzpicture}.\end{equation}
The permutation encoded in the strings of this diagram is $(2,8,5,4)(3,7,6)$, which has the factorisation
$(2,8,5,4)(3,7,6) = (2,3)(7,8)\cdot(2,7,5,4)(3,8,6)$
where $(2,3)(7,8) \in S_\mu$ and $(2,7,5,4)(3,8,6) \in \mathcal{R}_\mu$; we represent this factorisation by redrawing the diagram
\eqref{eg_cell_diag_unfact:diag} as
\[\begin{tikzpicture}[line width=0.5pt,xscale=1.2]
	%top row
	\node at (2,2.9){$y_1$};
	\node at (4.5,2.9){$y_2$};
	\node at (7,2.9){$y_3$};
	%lines and nodes for top row
	\draw (1,2.7) to (3,2.7);
		\node[dot] at (1,2.7){};
		\node[dot] at (2,2.7){};
		\node[dot] at (3,2.7){};
	\draw (4,2.7) to (5,2.7);
		\node[dot] at (4,2.7){};
		\node[dot] at (5,2.7){};
	\draw (6,2.7) to (8,2.7);
		\node[dot] at (6,2.7){};
		\node[dot] at (7,2.7){};
		\node[dot] at (8,2.7){};
	%lines top row to middle
	\draw (1,2.7) to (1,2);
	\draw (2,2.7) to (3,2);
	\draw (3,2.7) to (2,2);
	\draw (4,2.7) to (4,2);
	\draw (5,2.7) to (5,2);
	\draw (6,2.7) to (6,2);
	\draw (7,2.7) to (8,2);
	\draw (8,2.7) to (7,2);
	%nodes for middle row
	\node[dot] at (1,2){};
	\node[dot] at (2,2){};
	\node[dot] at (3,2){};
	\node[dot] at (4,2){};
	\node[dot] at (5,2){};
	\node[dot] at (6,2){};
	\node[dot] at (7,2){};
	\node[dot] at (8,2){};
	%bottom row
	\node at (1,1){$u_3 a_1$};
	\node at (2,1){$u_1 a_2$};
	\node at (3,1){$u_2 a_3$};
	\node at (4,1){$u_5 a_4$};
	\node at (5,1){$u_7 a_5$};
	\node at (6,1){$u_4 a_6$};
	\node at (7,1){$u_8 a_7$};
	\node at (8,1){$u_6 a_8$};
	%lines "middle to bottom"
	\draw (1,2) to (1,1.2);
	\draw (2,2) to (7,1.2);
	\draw (3,2) to (8,1.2);
	\draw (4,2) to (2,1.2);
	\draw plot [smooth] coordinates {(5,2) (4,1.6) (4,1.2)};
	\draw (6,2) to (3,1.2);
	\draw (7,2) to (5,1.2);
	\draw (8,2) to (6,1.2);
\end{tikzpicture}\]
and we note that in the lower part of this diagram, which represents the permutation $(2,7,5,4)(3,8,6)$, the strings associated to each $y_i$ do not
cross each other, which demonstrates that $(2,7,5,4)(3,8,6)$ is in $\mathcal{R}_\mu$.
Now in the upper part of the diagram, the arrangement of strings encodes the permutation $(2,3) \in S_3$ below both $y_1$ and $y_3$, while the strings
below $y_2$ encode the identity permutation in $S_2$. We remove the upper part of the diagram and let these permutations act on their respective
elements $y_i$, yielding
\[\begin{tikzpicture}[line width=0.5pt,xscale=1.2]
	%top row
	\node at (2,2.5){$y_1 (2,3)$};
	\node at (4.5,2.5){$y_2$};
	\node at (7,2.5){$y_3 (2,3)$};
	%lines and nodes for top row
	\draw (1,2.2) to (3,2.2);
		\node[dot] at (1,2.2){};
		\node[dot] at (2,2.2){};
		\node[dot] at (3,2.2){};
	\draw (4,2.2) to (5,2.2);
		\node[dot] at (4,2.2){};
		\node[dot] at (5,2.2){};
	\draw (6,2.2) to (8,2.2);
		\node[dot] at (6,2.2){};
		\node[dot] at (7,2.2){};
		\node[dot] at (8,2.2){};
	%bottom row
	\node at (1,1){$u_3 a_1$};
	\node at (2,1){$u_1 a_2$};
	\node at (3,1){$u_2 a_3$};
	\node at (4,1){$u_5 a_4$};
	\node at (5,1){$u_7 a_5$};
	\node at (6,1){$u_4 a_6$};
	\node at (7,1){$u_8 a_7$};
	\node at (8,1){$u_6 a_8$};
	%lines top to bottom
	\draw (1,2.2) to (1,1.15);
	\draw (2,2.2) to (7,1.15);
	\draw plot [smooth] coordinates {(3,2.2) (6.5,1.7) (7.5,1.5) (8,1.15)};
	\draw (4,2.2) to (2,1.15);
	\draw plot [smooth] coordinates {(5,2.2) (4.2,1.7) (4,1.15)};
	\draw (6,2.2) to (3,1.15);
	\draw (7,2.2) to (5,1.15);
	\draw (8,2.2) to (6,1.15);
\end{tikzpicture}.\]
Under our mapping, this corresponds to the pure tensor 
\begin{multline*}
  u_3 a_1 \otimes u_8 a_7 \otimes u_6 a_8 \otimes u_1 a_2 \otimes u_5 a_4 \otimes u_2 a_3 \otimes u_7 a_5 \otimes u_4 a_6 \otimes\\
  y_1(2,3) \otimes y_2 \otimes y_3(2,3) \otimes (2,7,5,4)(3,8,6),
\end{multline*}
and by letting $(x_1,x_2,x_3,x_4,x_5,x_6,x_7,x_8) = (u_3, u_6, u_8, u_1, u_5, u_2, u_4, u_7)$, $\sigma = (1,2,3)(4,6,8,7,5)$ and
$\gamma = (1,3,8,7,4)(2,6)$, and noting as above that then $\gamma\sigma = (2,8,5,4)(3,7,6) = (2,3)(7,8)\cdot(2,7,5,4)(3,8,6)$
where $(2,3)(7,8) \in S_\mu$ and $(2,7,5,4)(3,8,6) \in \mathcal{R}_\mu$,
we may verify that this is indeed the image of \eqref{diag_eg_elt_of_diag_module:eq} under the action of \eqref{diag_eg_elt_of_A_wr_Sn:eq}
as given by \eqref{wr_mod_full_action:eq}.
In the general case, for the $A{\wr}S_n$-module $\Theta^\mu(\ul{X},\ul{Y})$, let $d$ be the diagram formed from the permutation diagram of
$\gamma \in \mathcal{R}_\mu$ with labels $y_1$ to $y_r$ on the top row and labels $u_1$ to $u_n$ on the bottom row, and let $a$ be the element
$(\sigma;a_1,\ldots,a_n)$ of $A{\wr}S_n$. Then we have $\gamma\sigma = \theta\zeta$ where $\theta \in S_\mu$ and $\zeta \in \mathcal{R}_\mu$, and so
$\theta$ corresponds to some element $(\theta_1,\ldots,\theta_r)$ of $S_{\mu_1} \times\cdots\times S_{\mu_r}$ under the canonical isomorphism.
Then the image of $d$ under the action of $a$ is the diagram formed from the permutation diagram of $\zeta$ with top row labels
$y_1\theta_1$ to $y_r\theta_r$ and bottom row labels $u_{(1)\sigma^{-1}}a_1$ to $u_{(n)\sigma^{-1}}a_n$; we leave it to the reader to convince
themselves that in this diagram the nodes of the $i$\tss{th} block on the top row are connected to elements of $X_i$, and moreover that this diagram
does indeed represent the action of $a$ on the pure tensor of $\Theta^\mu(\ul{X},\ul{Y})$ represented by $d$.

In order to use our work below to obtain an alternative proof of the result of Geetha and Goodman that the wreath product of a cyclic cellular algebra
with $S_n$ is again cyclic cellular, we shall use the following.

\begin{proposition}\label{Theta_cyclic:prop}
If $X_1,\ldots,X_r$ are cyclic $A$-modules, and for each $i$, $Y_i$ is a cyclic $kS_{\mu_i}$-module, then $\Theta^\mu(\ul{X},\ul{Y})$ is a cyclic
$A{\wr}S_n$-module for any $r$ part composition $\mu$ of $n$. Indeed, if $x_i$ is a generator for $X_i$ and $y_i$ is a generator for $Y_i$, the diagram 
\[\begin{tikzpicture}[line width=0.5pt]
	%top row
	\node at (2.5,2.5){$y_1$};
	%lines and nodes for top row
	\draw (1,2.2) to (2.5,2.2);
	\draw[dotted] (2.5,2.2) to (3.5,2.2);
	\draw (3.5,2.2) to (4,2.2);
		\node[dot] at (1,2.2){};
		\node[dot] at (2,2.2){};
		\node[dot] at (4,2.2){};
	%bottom row
	\node at (1,1){$x_1$};
	\node at (2,1){$x_1$};
	\node at (3,1){$\cdots$};
	\node at (4,1){$x_1$};
	%lines top to bottom
	\draw (1,2.2) to (1,1.15);
	\draw (2,2.2) to (2,1.15);
	\draw (4,2.2) to (4,1.15);
	%%%%
	%top row
	\node at (6.5,2.5){$y_2$};
	%lines and nodes for top row
	\draw (5,2.2) to (6.5,2.2);
	\draw[dotted] (6.5,2.2) to (7.5,2.2);
	\draw (7.5,2.2) to (8,2.2);
		\node[dot] at (5,2.2){};
		\node[dot] at (6,2.2){};
		\node[dot] at (8,2.2){};
	%bottom row
	\node at (5,1){$x_2$};
	\node at (6,1){$x_2$};
	\node at (7,1){$\cdots$};
	\node at (8,1){$x_2$};
	%lines top to bottom
	\draw (5,2.2) to (5,1.15);
	\draw (6,2.2) to (6,1.15);
	\draw (8,2.2) to (8,1.15);
	%%%%
	\node at (9,1.5){$\cdots$};
	%%%%
	%top row
	\node at (11.5,2.5){$y_r$};
	%lines and nodes for top row
	\draw (10,2.2) to (11.5,2.2);
	\draw[dotted] (11.5,2.2) to (12.5,2.2);
	\draw (12.5,2.2) to (13,2.2);
		\node[dot] at (10,2.2){};
		\node[dot] at (11,2.2){};
		\node[dot] at (13,2.2){};
	%bottom row
	\node at (10,1){$x_r$};
	\node at (11,1){$x_r$};
	\node at (12,1){$\cdots$};
	\node at (13,1){$x_r$};
	%lines top to bottom
	\draw (10,2.2) to (10,1.15);
	\draw (11,2.2) to (11,1.15);
	\draw (13,2.2) to (13,1.15);
\end{tikzpicture}\]
(where each $x_i$ appears $\mu_i$ times) generates $\Theta^\mu(\ul{X},\ul{Y})$.
\end{proposition}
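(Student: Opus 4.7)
The plan is to denote by $v_0$ the pure tensor
\[ v_0 = x_1^{\otimes \mu_1}\otimes \cdots \otimes x_r^{\otimes \mu_r}\otimes y_1 \otimes\cdots\otimes y_r \otimes e \]
represented by the given diagram (with $e$ the identity of $S_n$), and to show in three steps that the $A{\wr}S_n$-submodule $M$ generated by $v_0$ contains the spanning set of pure tensors for $\Theta^\mu(\ul{X},\ul{Y})$. For brevity I write $b(i)$ for the block index $j$ with $\mu_1+\cdots+\mu_{j-1} < i \leqslant \mu_1+\cdots+\mu_j$, so that the $i$\tss{th} bottom tensor factor of $v_0$ is $x_{b(i)}$.

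First I would act on $v_0$ by elements $(\sigma;1,\ldots,1)$ with $\sigma \in S_\mu$. Here $\gamma = e$ and $\gamma\sigma = \sigma\cdot e$ with $\sigma \in S_\mu$ and $e \in \mathcal{R}_\mu$, so the factorisation $\gamma\sigma = \theta\zeta$ appearing in \eqref{wr_mod_full_action:eq} must be $\theta = \sigma$, $\zeta = e$. Writing $\sigma = (\sigma_1,\ldots,\sigma_r)$ under $S_\mu\cong S_{\mu_1}\times\cdots\times S_{\mu_r}$, the action changes the top labels to $y_1\sigma_1,\ldots,y_r\sigma_r$, and replaces each $x_{b(i)}$ by $x_{b((i)\sigma^{-1})} = x_{b(i)}$ (since $\sigma$ preserves the blocks). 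By cyclicity of each $Y_i$ and multilinearity, $k$-linear combinations of these images yield every pure tensor of the form
\[ x_1^{\otimes \mu_1}\otimes\cdots\otimes x_r^{\otimes \mu_r}\otimes z_1\otimes\cdots\otimes z_r \otimes e \]
with $z_i \in Y_i$ arbitrary.

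Next I would act on such an element by $(\zeta;1,\ldots,1)$ for any $\zeta \in \mathcal{R}_\mu$. Now $\gamma\sigma = e\cdot\zeta$ is already of the required form with $\theta = e$, so the action simply replaces the last tensor factor by $\zeta$, giving $x_1^{\otimes \mu_1}\otimes\cdots\otimes x_r^{\otimes \mu_r}\otimes z_1\otimes\cdots\otimes z_r \otimes \zeta \in M$. Finally I would act by $(e;a_1,\ldots,a_n)$; again $\theta = e$ and the $\mathcal{R}_\mu$-part is unchanged, while the $i$\tss{th} bottom tensor factor $x_{b(i)}$ is replaced by $x_{b(i)}\,a_{(i)\zeta}$. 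As $(a_1,\ldots,a_n)\mapsto (a_{(1)\zeta},\ldots,a_{(n)\zeta})$ is a bijection of $A^n$, the cyclicity of each $X_i$ then lets me realise any $w_i \in X_{b(i)}$ as the $i$\tss{th} bottom factor.

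Putting the steps together, $M$ contains every pure tensor $w_1\otimes\cdots\otimes w_n\otimes z_1\otimes\cdots\otimes z_r\otimes \zeta$ with $\zeta\in\mathcal{R}_\mu$, $z_i \in Y_i$ and $w_i\in X_{b(i)}$, and these span $\Theta^\mu(\ul{X},\ul{Y})$, so $M = \Theta^\mu(\ul{X},\ul{Y})$. The main bookkeeping concern will be correctly tracking the factorisation $\gamma\sigma = \theta\zeta$ at each step, but in each of the three cases above it is forced by whether $\sigma$ lies in $S_\mu$, in $\mathcal{R}_\mu$, or equals $e$.
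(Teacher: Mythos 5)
Your proof is correct and follows essentially the same route as the paper's: apply $(\sigma;1,\ldots,1)$ with $\sigma\in S_\mu$ to generate the $Y_i$-parts, then $(\zeta;1,\ldots,1)$ with $\zeta\in\mathcal{R}_\mu$ to arrange the strings, then $(e;a_1,\ldots,a_n)$ to generate the $X_i$-parts, and conclude since such pure tensors span $\Theta^\mu(\ul{X},\ul{Y})$. The only difference is that you track the factorisation $\gamma\sigma=\theta\zeta$ explicitly via \eqref{wr_mod_full_action:eq} where the paper argues diagrammatically, and your bookkeeping is accurate.
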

\begin{proof}
Let $d_0$ be the diagram in the proposition. It is easy to see that we may obtain any diagram in $\Theta^\mu(\ul{X},\ul{Y})$ by first applying an
element $(\theta;1,\ldots,1)$ of $A{\wr}S_n$, where $\theta \in S_\mu$, in order to replace each element $y_i$ in $d_0$ with an arbitrary element of
$Y_i$, then applying $(\gamma;1,\ldots,1)$ for some $\gamma \in \mathcal{R}_\mu$ to arrange the strings of the diagram, and finally applying an element
$(e;a_1,\ldots,a_n)$ to replace each element $x_i$ with an arbitrary element of $X_i$. Since $\Theta^\mu(\ul{X},\ul{Y})$ is spanned by diagrams,
the proof is complete.
\end{proof}

\section{The iterated inflation structure of the wreath product algebra}\label{it_inf_awrsn:sec}
Now we turn to the case where our interest lies. Let $A$ be a cellular algebra with anti-involution $\ast$ and cellular data $(\Lambda,M,C)$, where
$|\Lambda|=r$; since any partial order may be extended to a total order, we assume without loss of generality that $\Lambda$ is totally ordered, and so
we may list its elements in order as $\lambda_1 > \lambda_2 > \cdots > \lambda_r$.
We write $\Delta^\lambda$ for the right cell module associated to $\lambda \in \Lambda$ as noted above. For convenience we may omit the cell index
superscript from elements of the cellular basis, so we write $C_{S,T}$ rather than $C^\lambda_{S,T}$. We have a basis of $A{\wr}S_n$ consisting of
all elements of the form $(\sigma; C_{S_1,T_1},\ldots,C_{S_n,T_n})$ where $\sigma \in S_n$ and each $C_{S_i,T_i}$ is some element of
the cellular basis of $A$; note that we allow the elements $C_{S_i,T_i}$ to be associated to different cell indices. We shall denote this basis by
$\mathcal{A}$. Now elements of $\mathcal{A}$ are represented by diagrams like, for example,
\begin{equation}\label{unsplit_diag:diag}
\begin{tikzpicture}[line width=0.5pt,xscale=1.5]
	%top row
	\node[dot] at (1,2){};
	\node[dot] at (2,2){};
	\node[dot] at (3,2){};
	\node[dot] at (4,2){};
	\node[dot] at (5,2){};
	%bottom row
	\node at (1,1){$C_{S_1,T_1}$};
	\node at (2,1){$C_{S_2,T_2}$};
	\node at (3,1){$C_{S_3,T_3}$};
	\node at (4,1){$C_{S_4,T_4}$};
	\node at (5,1){$C_{S_5,T_5}$};
	%lines "top to bottom"
	\draw (1,2) to (3,1.15);
	\draw (2,2) to (1,1.15);
	\draw (3,2) to (5,1.15);
	\draw (4,2) to (2,1.15);
	\draw (5,2) to (4,1.15);
\end{tikzpicture}
\end{equation}
but we want a slightly different representation. Indeed, in the diagram \eqref{unsplit_diag:diag}, we replace each
$C_{S_i,T_i}$ with the pair $S_i,T_i$, and then move the $S_i$ up to the top of the associated string, to get
\[\begin{tikzpicture}[line width=0.5pt,xscale=1.5]
	%top row
	\node at (1,2){$S_3$};
	\node at (2,2){$S_1$};
	\node at (3,2){$S_5$};
	\node at (4,2){$S_2$};
	\node at (5,2){$S_4$};
	%bottom row
	\node at (1,1){$T_1$};
	\node at (2,1){$T_2$};
	\node at (3,1){$T_3$};
	\node at (4,1){$T_4$};
	\node at (5,1){$T_5$};
	%lines "top to bottom"
	\draw (1,1.8) to (3,1.2);
	\draw (2,1.8) to (1,1.2);
	\draw (3,1.8) to (5,1.2);
	\draw (4,1.8) to (2,1.2);
	\draw (5,1.8) to (4,1.2);
\end{tikzpicture}.\]
We thus obtain a different way of representing elements of $\mathcal{A}$, as diagrams of the form
\begin{equation}\label{split_diag_xy:diag}
\begin{tikzpicture}[line width=0.5pt,xscale=1.5]
	%top row
	\node at (1,2){$U_1$};
	\node at (2,2){$U_2$};
	\node at (3,2){$U_3$};
	\node at (4,2){$U_4$};
	\node at (5,2){$U_5$};
	%bottom row
	\node at (1,1){$W_1$};
	\node at (2,1){$W_2$};
	\node at (3,1){$W_3$};
	\node at (4,1){$W_4$};
	\node at (5,1){$W_5$};
	%lines "top to bottom"
	\draw (1,1.8) to (3,1.2);
	\draw (2,1.8) to (1,1.2);
	\draw (3,1.8) to (5,1.2);
	\draw (4,1.8) to (2,1.2);
	\draw (5,1.8) to (4,1.2);
\end{tikzpicture}
\end{equation}
consisting of a permutation diagram where the nodes on the top and bottom rows are replaced with elements
$U_i,W_i \in \sqcup_{\lambda \in \Lambda}M(\lambda)$, such that if $U_i$ on the top row is connected to $W_j$ on the bottom row, then we must have
$U_i,W_j \in M(\lambda)$ for some $\lambda \in \Lambda$ (i.e. $U_i$ and $W_j$ lie in \emph{the same} set $M(\lambda)$). Note that the diagram
\eqref{split_diag_xy:diag} represents the element
\[ \bigl((1,3,5,4,2);\,C_{U_2,W_1},\,C_{U_4,W_2},\,C_{U_1,W_3},\,C_{U_5,W_4},\,C_{U_3,W_5}\bigr) \in A{\wr}S_5. \]
Now given any such diagram, for each $i \in \{1,\ldots,r\}$ we let $\mu_i$ be the number of elements $U_j$ such that $U_j \in M(\lambda_i)$. We thus
obtain a composition $\mu = (\mu_1,\ldots,\mu_r)$ of $n$ (note that some of the parts $\mu_i$ may be zero in general). We call this the
\emph{layer index} of the diagram, and also of the element of $\mathcal{A}$ which it represents. We let $k\mathcal{A}_\mu$ be the $k$-span of all
elements of $\mathcal{A}$ with layer index $\mu$, and we let $I(n,r)$ be the set of all $r$-part compositions of $n$ with non-negative integer entries.
Then $A{\wr}S_n = \bigoplus_{\mu \in I(n,r)}k\mathcal{A}_\mu$. For a layer index $\mu$, we define a \textit{half diagram of type $\mu$} to be a tuple
$(U_1,\ldots,U_n)$ of $n$ elements  of $\sqcup_{\lambda \in \Lambda}M(\lambda)$, such that there are exactly $\mu_i$ elements of $M(\lambda_i)$ for
each $i$. We define $\mathcal{V}_\mu$ to be the set of all half diagrams of type $\mu$. Now if $(U_1,\ldots,U_n)$ is a half diagram of type $\mu$,
then we may easily see that there is a unique element $\epsilon$ of $\mathcal{R}_\mu$ such that
$(U_{(1)\epsilon},\ldots,U_{(n)\epsilon})$ lies in the set
$M(\lambda_1)^{\mu_1}{\times}\cdots{\times}M(\lambda_r)^{\mu_r}$; we shall call this $\epsilon$ the \textit{shape} of the
half diagram $(U_1,\ldots,U_n)$.

Let $E$ be the diagram with top row $U_1$ to $U_n$, bottom row $W_1$ to $W_n$ (reading from left to right), and where $\sigma \in S_n$ is the
permutation such that $U_i$ is connected to $W_{(i)\sigma}$; then $E$ represents the element
\[ \bigl( \sigma\,;\, C[U_{(1)\sigma^{-1}},W_1],\ldots,C[U_{(n)\sigma^{-1}},W_n] \bigr)\]
where to ease the notation we allow ourselves to write $C[U,W]$ for $C_{U,W}$.
Suppose $E$ has layer index $\mu$. We may decompose $E$ into three pieces of data, namely the half diagrams $(U_1,\ldots,U_n)$, $(W_1,\ldots,W_n)$ of
type $\mu$, formed from the top and bottom rows of $E$ respectively, and the element $(\pi_1,\ldots,\pi_r)$ of the group
$S_{\mu_i}\times\cdots\times S_{\mu_r}$ where $\pi_i \in S_{\mu_i}$ is such that (counting from
left to right) the $j$\tss{th} element of $M(\lambda_i)$ on the top row is connected to the $(j)\pi_i$\tss{th} element of $M(\lambda_i)$ on the bottom
row; thus $\pi_i$ records how the elements of $M(\lambda_i)$ on the top row are connected to the elements of $M(\lambda_i)$ on the bottom
row. For example, suppose that $r=3$ and that the diagram \eqref{split_diag_xy:diag} has layer index $(3,0,2)$ with $U_1,U_2,U_4 \in M(\lambda_1)$
and $U_3,U_5 \in M(\lambda_3)$. Then $(\pi_1,\pi_2,\pi_3) = \bigl((1,3,2),e,(1,2)\bigr)$ (note that $e$ here is the unique element of the trivial
group $S_{\mu_2} = S_0$). It is easy to see that if $\epsilon,\delta$ are the shapes of $(U_1,\ldots,U_n)$ and $(W_1,\ldots,W_n)$ respectively, and
further if $\pi$ is the image of $(\pi_1,\ldots,\pi_r)$ under the natural identification of $S_{\mu_i}\times\cdots\times S_{\mu_r}$ with the Young
subgroup $S_\mu$ of $S_n$, then $\sigma = \epsilon^{-1}\pi\delta$. If we now let
$V_\mu$ be the $k$-vector space with basis $\mathcal{V}_\mu$, then the above decomposition is easily seen to afford a $k$-linear bijection
\[V_\mu \otimes kS_\mu \otimes V_\mu \longrightarrow k\mathcal{A}_\mu\]
given by mapping
\[ (U_1,\ldots,U_n) \otimes \pi \otimes (W_1,\ldots,W_n), \]
to
\[\bigl(\epsilon^{-1}\pi\delta\,;\; C[U_{(1)(\epsilon^{-1}\pi\delta)^{-1}},W_1],\ldots,C[U_{(n)(\epsilon^{-1}\pi\delta)^{-1}},W_n]\bigr)\]
where $\epsilon$ is the shape of $(U_1,\ldots,U_n)$ and $\delta$ is the shape of $(W_1,\ldots,W_n)$. We equip the set $I(n,r)$ with the usual dominance
order on compositions. We thus have a decomposition $A{\wr}S_n = \bigoplus_{\mu \in I(n,r)}V_\mu \otimes kS_\mu \otimes V_\mu$. Now take
$\mathcal{V}_\mu$ as above, $B_\mu$ to be $kS_\mu$ and $\mathcal{B}_\mu$ to be $S_\mu$. We may easily see that our basis $\mathcal{A}$ is indeed the
basis of $A{\wr}S_n$ obtained from the bases $\mathcal{V}_\mu$ and $\mathcal{B}_\mu$ as in section \ref{it_inf:sec}, and we shall now prove that our
decomposition exhibits $A{\wr}S_n$ as an iterated inflation with respect to the anti-involution given by \eqref{anti_inv_def:eq} and the cellular
structure on the algebras $kS_\mu$ as in Proposition \ref{ten_prod_sn:prop}. Thus, we must prove that the equations \eqref{it_inf_anti_gen:eq} and
\eqref{it_inf_mult_gen:eq} hold. The fact that equation \eqref{it_inf_anti_gen:eq} holds follows easily from the description of the anti-involution on
$A{\wr}S_n$ given after equation \eqref{anti_inv_def:eq}. To prove that \eqref{it_inf_mult_gen:eq} holds, we shall prove the following slightly
stronger result.

\begin{proposition}\label{mult_cond_for_wrpr:prop}
Let $\mu \in I(n,r)$, and let $u=(U_1,\ldots,U_n), w=(W_1,\ldots,W_n)$ be elements of $\mathcal{V}_\mu$ and $\pi=(\pi_1,\ldots,\pi_r) \in S_\mu$ such
that the element of $\mathcal{A}$ corresponding to the pure tensor $u \otimes{\pi}\otimes w$ has layer index $\mu$. Further, let
$a = (\sigma;a_1,\ldots,a_n)$ be a pure tensor in $A{\wr}S_n$. Then we have
$(u \otimes {\pi} \otimes w) \cdot a \equiv u \otimes {\pi}\,\theta_\mu(w,a) \otimes \phi_\mu(w,a)$ modulo
elements of $\mathcal{A}$ of layer index strictly less than $\mu$, where $\theta_\mu(w,a) \in S_\mu$ and $\phi_\mu(w,a) \in V_\mu$ are independent of
$u$ and ${\pi}$.
\end{proposition}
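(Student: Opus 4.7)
The plan is to compute $(u\otimes\pi\otimes w)\cdot a$ directly using the wreath product multiplication, apply the cellular multiplication rule of $A$ at each of the $n$ entries, and re-express the leading part via the layer bijection $V_\mu\otimes kS_\mu\otimes V_\mu \cong k\mathcal{A}_\mu$. Writing $\epsilon,\delta \in \mathcal{R}_\mu$ for the shapes of $u,w$ respectively and setting $\tau := \epsilon^{-1}\pi\delta$, the tensor $u\otimes\pi\otimes w$ corresponds to $\bigl(\tau;\,C[U_{(1)\tau^{-1}},W_1],\ldots,C[U_{(n)\tau^{-1}},W_n]\bigr)$. Multiplying on the right by $a=(\sigma;a_1,\ldots,a_n)$ then yields
\[
\bigl(\tau\sigma;\; C[U_{(1)(\tau\sigma)^{-1}},W_{(1)\sigma^{-1}}]\,a_1,\;\ldots,\; C[U_{(n)(\tau\sigma)^{-1}},W_{(n)\sigma^{-1}}]\,a_n\bigr).
\]

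Next I would apply \eqref{cell_alg_mult_def:eq} to each factor: the product $C[U_{(i)(\tau\sigma)^{-1}},W_{(i)\sigma^{-1}}]\,a_i$ equals $\sum_{X_i} R_{a_i}(W_{(i)\sigma^{-1}},X_i)\,C[U_{(i)(\tau\sigma)^{-1}},X_i]$ plus cellular basis terms of strictly lower cell index (the $X_i$ ranging over the $M(\lambda)$ that contains $W_{(i)\sigma^{-1}}$). A short dominance-order check shows that a drop of cell index at any one position, say from $\lambda_j$ to $\lambda_k$ with $k>j$, shifts one unit of mass in the layer index from slot $j$ to slot $k$ and strictly decreases the partial sums $\mu_1+\cdots+\mu_\ell$ for $j\le\ell<k$, so such a contribution lies in $k\mathcal{A}_{\mu'}$ with $\mu'\triangleleft\mu$. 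Modulo lower layer index, the product therefore simplifies to
\[
\sum_{X_1,\ldots,X_n}\Big(\prod_i R_{a_i}(W_{(i)\sigma^{-1}},X_i)\Big)\,\bigl(\tau\sigma;\,C[U_{(1)(\tau\sigma)^{-1}},X_1],\ldots,C[U_{(n)(\tau\sigma)^{-1}},X_n]\bigr).
\]

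The key observation for the third step is that each summand above still has top half diagram $u$ (the top-row label at position $k$ works out to $U_{(k)(\tau\sigma)(\tau\sigma)^{-1}} = U_k$), so each summand lies in $k\mathcal{A}_\mu$ and the layer bijection factors $\tau\sigma = \epsilon^{-1}\pi'\delta'$ uniquely, giving $\pi' = \pi\cdot\delta\sigma\delta'^{-1} \in S_\mu$, where $\delta'$ is the shape of $w'(X) := (X_1,\ldots,X_n)$. The critical point is that $\delta'$ is determined solely by the cell-index pattern of $w'(X)$, and since each $X_i$ is constrained to lie in the same $M(\lambda)$ as $W_{(i)\sigma^{-1}}$, that pattern is forced to coincide with the cell-index pattern of $(W_{(1)\sigma^{-1}},\ldots,W_{(n)\sigma^{-1}})$; so $\delta'$ is a single element of $\mathcal{R}_\mu$, depending only on $w$ and $\sigma$. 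We may therefore set
\[
\theta_\mu(w,a) := \delta\sigma\delta'^{-1} \in S_\mu,\qquad \phi_\mu(w,a) := \sum_{X_1,\ldots,X_n}\prod_i R_{a_i}(W_{(i)\sigma^{-1}},X_i)\,(X_1,\ldots,X_n) \in V_\mu,
\]
both of which are manifestly independent of $u$ and $\pi$, yielding the required congruence.

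The main obstacle will be the bookkeeping for the interplay of $\tau$, $\sigma$, $\epsilon$, $\delta$, $\delta'$ under the layer bijection; in particular, verifying that $\delta\sigma\delta'^{-1}$ really does land in $S_\mu$ (which falls out of the uniqueness of the layer decomposition once preservation of the top half diagram $u$ is established) and checking carefully that the cell-index constraint on the $X_i$'s forces $\delta'$ to be common to all summands.
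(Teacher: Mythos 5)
Your proposal is correct and follows essentially the same route as the paper's proof: express $u\otimes\pi\otimes w$ as an element of $\mathcal{A}$ via the layer bijection, multiply by $(\sigma;a_1,\ldots,a_n)$, expand each entry by the cellular multiplication rule \eqref{cell_alg_mult_def:eq}, discard the strictly-lower-layer terms, and identify the leading term using the factorisation $\delta\sigma=\theta\zeta$ with $\theta\in S_\mu$ and $\zeta\in\mathcal{R}_\mu$ (your $\delta'$ is the paper's $\zeta$ and your $\delta\sigma\delta'^{-1}$ is the paper's $\theta_\mu(w,a)$). The only substantive difference is that you spell out the dominance-order verification that a drop of cell index at any tensor position forces a strict drop of layer index, a point the paper leaves implicit.
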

Note that in the proposition we allow the $a$ in $\theta_\mu(w,a)$ and $\phi_\mu(w,a)$ to be any pure tensor in $A{\wr}S_n$ rather than just an element
of $\mathcal{A}$ as required in \eqref{it_inf_mult_gen:eq}.
\begin{proof}
Let $\epsilon,\delta \in \mathcal{R}_\mu$ be the shapes of $u$ and $w$ respectively, so that $u \otimes {\pi} \otimes w$ corresponds to the element
\[\bigl(\epsilon^{-1}\pi\delta\,;\; C[U_{(1)(\epsilon^{-1}\pi\delta)^{-1}},W_1],\ldots,C[U_{(n)(\epsilon^{-1}\pi\delta)^{-1}},W_n]\bigr).\]
Then
\begin{multline*}
(u \otimes {\pi} \otimes w)(\sigma;a_1,\ldots,a_n)=\\
\bigl(\epsilon^{-1}\pi\delta\,;\; C[U_{(1)(\epsilon^{-1}\pi\delta)^{-1}},W_1],\ldots,C[U_{(n)(\epsilon^{-1}\pi\delta)^{-1}},W_n]\bigr)
\bigl(\sigma;a_1,\ldots,a_n\bigr)=\\
\bigl(\epsilon^{-1}\pi\delta\sigma; C[U_{(1)(\epsilon^{-1}\pi\delta\sigma)^{-1}},W_{(1)\sigma^{-1}}]a_1,\ldots,
C[U_{(n)(\epsilon^{-1}\pi\delta\sigma)^{-1}},W_{(n)\sigma^{-1}}]a_n\bigr).
\end{multline*}
For each $i=1,\ldots,n$, we have by \eqref {cell_alg_mult_def:eq} that if $\lambda$ is the element of $\Lambda$ such that
$U_{(i)(\epsilon^{-1}\pi\delta\sigma)^{-1}},W_{(i)\sigma^{-1}} \in M(\lambda)$, then
\[ C[U_{(i)(\epsilon^{-1}\pi\delta\sigma)^{-1}},W_{(i)\sigma^{-1}}]a_i \equiv
\sum_{X_i \in M(\lambda)} R_{a_i}(W_{(i)\sigma^{-1}},X_i)C[U_{(i)(\epsilon^{-1}\pi\delta\sigma)^{-1}},X_i] \]
modulo cellular basis elements of lower cell index. Using this, we see that $(u \otimes {\pi} \otimes w)(\sigma;a_1,\ldots,a_n)$
is congruent modulo elements of $\mathcal{A}$ of lower layer index to
\begin{multline}\label{cong_equiv_prod_u_pi_v_sig_a:eq}
\sum_{X_1}\cdots\sum_{X_n}\left(\prod_{i=1}^n R_{a_i}\bigl(W_{(i)\sigma^{-1}},X_i\bigr)\right)
\bigl(\epsilon^{-1}\pi\delta\sigma\,;\; C[U_{(1)(\epsilon^{-1}\pi\delta\sigma)^{-1}},X_1],\ldots,\\
C[U_{(n)(\epsilon^{-1}\pi\delta\sigma)^{-1}},X_n]\bigr).
\end{multline}
Now $X_i$ lies in the same set $M(\lambda)$ as $W_{(i)\sigma^{-1}}$, and from this we may easily see that the shape of $(X_1,\ldots,X_n)$ is the unique
element $\zeta$ of $\mathcal{R}_\mu$ such that $\delta\sigma = \theta\zeta$ for $\theta \in S_\mu$. Thus in \eqref{cong_equiv_prod_u_pi_v_sig_a:eq} we
have
\begin{multline*}
\bigl(\epsilon^{-1}\pi\delta\sigma\,;\;
C[U_{(1)(\epsilon^{-1}\pi\delta\sigma)^{-1}},X_1],\ldots,C[U_{(n)(\epsilon^{-1}\pi\delta\sigma)^{-1}},X_n]\bigr)\\
=\bigl(\epsilon^{-1}\pi\theta\zeta\,;\;
C[U_{(1)(\epsilon^{-1}\pi\theta\zeta)^{-1}},X_1],\ldots,C[U_{(n)(\epsilon^{-1}\pi\theta\zeta)^{-1}},X_n]\bigr)
\end{multline*}
which we now see corresponds to the pure tensor $u \otimes \pi\theta \otimes (X_1,\ldots,X_n)$, and hence \eqref{cong_equiv_prod_u_pi_v_sig_a:eq} is
equal to
\[
u \otimes \pi\theta \otimes \left( \sum_{X_1}\cdots\sum_{X_n}\left(\prod_{i=1}^n R_{a_i}\bigl(W_{(i)\sigma^{-1}},X_i\bigr)\right)
(X_1,\ldots,X_n)\right).
\]
Thus, setting $\theta_\mu(w,a)$ to be the unique element $\theta$ of $S_\mu$ such that $\delta\sigma = \theta\zeta$ for
$\zeta \in \mathcal{R}_\mu$ and $\phi_\mu(w,a)$ to be
\begin{equation}\label{phi_mu_v_a_formula:eq}
\sum_{X_1}\cdots\sum_{X_n}\left(\prod_{i=1}^n R_{a_i}\bigl(W_{(i)\sigma^{-1}},X_i\bigr)\right)(X_1,\ldots,X_n),
\end{equation}
we see that $(u \otimes \pi \otimes w)(\sigma;a_1,\ldots,a_n) \equiv u \otimes \pi\theta_\mu(w,a) \otimes \phi_\mu(w,a)$
modulo lower layers, and furthermore these values depend only on $w$ and $a$, as required.
\end{proof}

By the results in Section \ref{it_inf:sec}, we now have that $A{\wr}S_n$ is a cellular algebra; further, we may use Proposition \ref{ten_prod_sn:prop}
to see that the set indexing the cell modules of $A{\wr}S_n$ is the set of all pairs $\bigl(\mu,(\nu_1,\ldots,\nu_r)\bigr)$ where $\mu$ is an
$r$-component composition $(\mu_1,\ldots,\mu_r)$ of $n$ (recalling that $r = |\Lambda|$), and $\nu_i$ is a partition of $\mu_i$. Thus in any such pair
we have $\mu = (|\nu_1|,\ldots,|\nu_r|)$, and so we lose no information if we omit the partition $\mu$ from these pairs. Hence we may identify the set
of cell indices of $A{\wr}S_n$ with the set of all $r$-tuples $(\nu_1,\ldots,\nu_r)$ of partitions such that $|\nu_1|+\cdots+|\nu_r| = n$
(with $\nu_i = ()$ allowed); such tuples are called \textit{multipartitions} of $n$ of \emph{length} $r$. We now give a statement of the
cellularity of $A{\wr}S_n$.

\begin{theorem}\label{wr_prod_cell:thm}
Let $A$ be a cellular algebra with anti-involution $\ast$ and poset $\Lambda$ of cell indices.
Let $\ul{\Lambda}^r_n$ denote the set of all \textit{multipartitions} of $n$ of length $r$. Then $A{\wr}S_n$ is a cellular algebra with respect to a
tuple of cellular data including the anti-involution given for $\sigma \in S_n$ and $a_1,\ldots,a_n \in A$ by
\[ (\sigma;a_1,\ldots,a_n)^\ast = \bigl(\sigma^{-1}\,;\,a_{(1)\sigma}^\ast,\ldots,a_{(n)\sigma}^\ast \bigr) \]
and also the poset consisting of $\ul{\Lambda}^r_n$ with the following partial order:
if $(\nu_1,\ldots,\nu_r),(\eta_1,\ldots,\eta_r) \in \ul{\Lambda}^r_n$ then $(\nu_1,\ldots,\nu_r) \geqslant (\eta_1,\ldots,\eta_r)$ means
either that  $(|\nu_1|,\ldots,|\nu_r|) \trianglerighteq (|\eta_1|,\ldots,|\eta_r|)$ or that 
$|\nu_i| = |\eta_i| \text{ and } \nu_i\trianglerighteq\eta_i \text{ for each $i$}$.
\end{theorem}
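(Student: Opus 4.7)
The plan is to apply the iterated inflation criterion summarised at the end of Section \ref{it_inf:sec}, namely \cite[Theorem 1]{GPIICA}, directly to the decomposition of $A{\wr}S_n$ constructed earlier in this section. Essentially all the work has already been done.

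First I would assemble the four ingredients required by the criterion. The $k$-linear decomposition $A{\wr}S_n = \bigoplus_{\mu \in I(n,r)} V_\mu \otimes kS_\mu \otimes V_\mu$ has been exhibited explicitly, together with a basis $\mathcal{A}$ compatible with the chosen bases $\mathcal{V}_\mu$ of $V_\mu$ and $S_\mu$ of $kS_\mu$. Each middle algebra $B_\mu = kS_\mu = k(S_{\mu_1}\times\cdots\times S_{\mu_r})$ is cellular by Proposition \ref{ten_prod_sn:prop}, with cell indices $\Lambda_{\mu_1}\times\cdots\times\Lambda_{\mu_r}$ ordered componentwise by dominance. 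The anti-involution compatibility \eqref{it_inf_anti_gen:eq} is immediate from the description of $\ast$ given in \eqref{anti_inv_def:eq}: flipping a diagram exchanges the two half-diagrams and leaves a permutation component in the appropriate Young subgroup on which the Proposition \ref{ten_prod_sn:prop} anti-involution acts correctly. Finally, the multiplication condition \eqref{it_inf_mult_gen:eq} is exactly what Proposition \ref{mult_cond_for_wrpr:prop} supplies (indeed slightly more, since there $a$ may be any pure tensor).

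Having verified the hypotheses, I would invoke \cite[Theorem 1]{GPIICA} to conclude that $A{\wr}S_n$ is cellular with respect to $\ast$ and that the poset of cell indices is $\{(\mu,\lambda) : \mu \in I(n,r),\ \lambda \in \Lambda_\mu\}$ under the lexicographic order (dominance on the first coordinate, componentwise dominance on the second). The remaining step is to rewrite this poset in the form claimed in the theorem. A cell index $\lambda \in \Lambda_\mu$ is an $r$-tuple $(\nu_1,\ldots,\nu_r)$ with $\nu_i \vdash \mu_i$, so the first coordinate $\mu = (|\nu_1|,\ldots,|\nu_r|)$ is redundant. Dropping it yields a bijection between the lex poset and $\ul{\Lambda}^r_n$, and under this bijection the lex order is precisely the two-case description stated: strict dominance in the first coordinate of the lex corresponds to the first case, while equality in the first coordinate together with the product order in the second corresponds to the second case.

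No step here presents a real obstacle: the entire section has been devoted to verifying the hypotheses of the iterated inflation criterion, and the theorem is its immediate harvest. The only point requiring a moment's care is the bookkeeping for the order translation, checking that the dominance orders on compositions and on the individual partitions combine through the lex construction to give exactly the partial order stated on multipartitions.
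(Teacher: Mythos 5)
Your proposal is correct and follows essentially the same route as the paper: the paper likewise assembles the decomposition $\bigoplus_{\mu}V_\mu\otimes kS_\mu\otimes V_\mu$, checks \eqref{it_inf_anti_gen:eq} from the diagrammatic description of $\ast$, supplies \eqref{it_inf_mult_gen:eq} via Proposition \ref{mult_cond_for_wrpr:prop}, and then invokes the iterated inflation theorem together with Proposition \ref{ten_prod_sn:prop}, finally dropping the redundant composition $\mu=(|\nu_1|,\ldots,|\nu_r|)$ to index cells by multipartitions. No substantive difference.
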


In the next section, we shall consider the cell modules which arise from this structure; in particular we shall recover the result of Geetha and
Goodman that if $A$ is cyclic cellular, then so is $A{\wr}S_n$.

We conclude this section by remarking that the most natural partial order on the poset $\ul{\Lambda}^r_n$ is the \emph{dominance order on
multipartitions} (see for example \cite[Definition 3.1, (1)]{GEGO}). We note that this dominance order is strictly stronger than the order we have
obtained on $\ul{\Lambda}^r_n$, and moreover that, subject to the assumption that $A$ is cyclic cellular, Geetha and Goodman obtained the dominance
order in their cellularity result. It seems natural to suppose that in the general case $A{\wr}S_n$ should still be cellular with respect to the
cellular data from
Theorem \ref{wr_prod_cell:thm} if we replace the above partial order on $\ul{\Lambda}^r_n$ with the dominance order; however, it would be impossible to
prove this result using the method of iterated inflations due to the structure of the partial orders obtained via this method. If it is indeed possible
to obtain the dominance order in our result, it would be necessary to use a more refined combinatorial argument of the kind used by Geetha
and Goodman in \cite{GEGO}.

\section{The cell and simple modules of the wreath product algebra}
Recall that the cell modules $\Delta^{\lambda_i}$ of $A$ are indexed by the cell indices ${\lambda_1 > \lambda_2 > \cdots > \lambda_r}$. In the
sequel we shall also allow ourselves to write $\Delta^{\lambda_i}$ as $\Delta(\lambda_i)$ when this makes our formulae more readable.
We shall now consider the cell modules of $A{\wr}S_n$. We know that these are indexed by length $r$ multipartitions of $n$; let $(\nu_1,\ldots,\nu_r)$
be such a multipartition and $\mu$ the composition $(|\nu_1|,\ldots,|\nu_r|)$, so that $\mu_i = |\nu_i|$. We shall show that the cell module
$\Delta^{(\nu_1,\ldots,\nu_r)}$ is isomorphic to the module
$\Theta^\mu\bigl((\Delta^{\lambda_1},\ldots,\Delta^{\lambda_r}),(S^{\nu_1},\ldots,S^{\nu_r})\bigr)$.

Now we know from Proposition \ref{ten_prod_sn:prop} and the results in section \ref{it_inf:sec} that, as a $k$-vector space,
$\Delta^{(\nu_1,\ldots,\nu_r)}$ may naturally be identified with
\begin{equation}\label{cell_mod_vecsp_ident:eq}
S^{\nu_1}\otimes\cdots\otimes S^{\nu_r} \otimes V_\mu,
\end{equation}
so let us consider the structure of the vector space $V_\mu$. Indeed, let $\alpha_1,\ldots\alpha_n$ be elements of $\Lambda$ such that
\[
 (\alpha_1,\ldots,\alpha_n) =
 (\underbrace{\lambda_1,\lambda_1,\ldots,\lambda_1}_{\text{$\mu_1$ places}},\underbrace{\lambda_2,\ldots,\lambda_2}_{\text{$\mu_2$ places}},
 \lambda_3,\ldots,\underbrace{\lambda_r,\ldots,\lambda_r}_{\text{$\mu_r$ places}}).
\]
Let $(X_1,\ldots,X_n)$ be a half diagram  in $\mathcal{V}_\mu$. Then the shape of $(X_1,\ldots,X_n)$ is the unique element $\gamma$ of
$\mathcal{R}_\mu$ such that $(X_1,\ldots,X_n)$ lies in $M(\alpha_{(1)\gamma^{-1}})\times\cdots\times M(\alpha_{(n)\gamma^{-1}})$. We now see that
\[\mathcal{V}_\mu = \bigsqcup_{\gamma \in \mathcal{R}_\mu} M(\alpha_{(1)\gamma^{-1}})\times\cdots\times M(\alpha_{(n)\gamma^{-1}})\]
and hence if we identify the half diagram $(X_1,\ldots,X_n)$ with the pure tensor $C_{X_{1}}\otimes\cdots\otimes C_{X_{n}}$, we obtain a natural
identification of $k$-vector spaces
\begin{equation}\label{v_mu_id:eq}
 V_\mu = \bigoplus_{\gamma \in \mathcal{R}_\mu} \Delta(\alpha_{(1)\gamma^{-1}})\otimes\cdots\otimes \Delta(\alpha_{(n)\gamma^{-1}}).
\end{equation}
We shall henceforth consider these two vector
spaces to be thus identified; further, we shall abuse terminology and use the term \textit{pure tensor in $V_\mu$} to mean any pure tensor in any of the
summands in the right hand side of \eqref{v_mu_id:eq}. For example, using \eqref{cell_mod_def:eq}, we can show easily using 
\eqref{phi_mu_v_a_formula:eq} that under the identification \eqref{v_mu_id:eq} we have
\begin{equation}\label{phi_mu_v_a_formula_pure_ten:eq}
\phi_\mu\bigl(C_{W_1}\otimes\cdots\otimes C_{W_n},(\sigma;a_1,\ldots,a_n)\bigr) =
C_{W_{(1)\sigma^{-1}}}\,a_1\otimes \cdots \otimes C_{W_{(n)\sigma^{-1}}}\,a_n.
\end{equation}
In light of \eqref{cell_mod_vecsp_ident:eq}, we shall further speak of a \textit{pure tensor in $\Delta^{(\nu_1,\ldots,\nu_r)}$} to mean any pure
tensor of the form
\[ w_1\otimes\cdots\otimes w_r\otimes u_1\otimes \cdots \otimes u_n,\]
where $w_i \in S^{\nu_i}$ and $u_1\otimes \cdots \otimes u_n$ is a pure tensor in $V_\mu$. Using \eqref{phi_mu_v_a_formula_pure_ten:eq}
and the expression for $\theta_\mu(w,a)$ given near the end of the proof of Proposition \ref{mult_cond_for_wrpr:prop}, we may now verify that
the map taking the pure tensor
\[x_1 \otimes \cdots \otimes x_n \otimes y_1 \otimes \cdots \otimes y_r \otimes \gamma\]
in $\Theta^\mu\bigl((\Delta^{\lambda_1},\ldots,\Delta^{\lambda_r}),(S^{\nu_1},\ldots,S^{\nu_r})\bigr)$ (where $\gamma \in \mathcal{R}_\mu$)
to the pure tensor
\[y_1 \otimes \cdots \otimes y_r \otimes x_{(1)\gamma^{-1}} \otimes \cdots \otimes x_{(n)\gamma^{-1}}\]
in $\Delta^{(\nu_1,\ldots,\nu_r)}$ is an isomorphism of $A{\wr}S_n$-modules (but note that in order to apply the formula given in section
\ref{it_inf:sec} for the action of an iterated inflation on its cell modules, the arguments $w$ and $a$ in $\theta_\mu(w,a)$ and $\phi_\mu(w,a)$ must
be elements of the bases $\mathcal{A}$ and $\mathcal{V}_\mu$, respectively). We may now use Proposition \ref{Theta_cyclic:prop} and the fact that
all Specht modules are cyclic to obtain the following result.

\begin{proposition}
(Geetha and Goodman, \cite{GEGO}) If $A$ is cyclic cellular then so is $A{\wr}S_n$.
\end{proposition}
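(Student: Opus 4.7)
The plan is to assemble three ingredients that are already in place and feed them directly into Proposition \ref{Theta_cyclic:prop}.

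First, I would invoke the isomorphism established just before the statement, namely that for any multipartition $(\nu_1,\ldots,\nu_r) \in \ul{\Lambda}^r_n$ with $\mu = (|\nu_1|,\ldots,|\nu_r|)$, the cell module $\Delta^{(\nu_1,\ldots,\nu_r)}$ of $A\wr S_n$ is isomorphic as an $A\wr S_n$-module to
\[\Theta^\mu\bigl((\Delta^{\lambda_1},\ldots,\Delta^{\lambda_r}),(S^{\nu_1},\ldots,S^{\nu_r})\bigr).\]
So cyclicity of $A\wr S_n$ reduces to showing that each such $\Theta$-module is cyclic as an $A\wr S_n$-module, since cellularity already follows from Theorem \ref{wr_prod_cell:thm}.

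Second, I would observe that the hypothesis that $A$ is cyclic cellular says exactly that each cell module $\Delta^{\lambda_i}$ is cyclic as an $A$-module, so the first tuple of arguments fed into $\Theta^\mu$ consists of cyclic modules. For the second tuple, I would remark that it is a standard fact (going back to James, and explicit e.g.\ in \cite{MATHBK}) that each Specht module $S^{\nu_i}$ is a cyclic $kS_{\mu_i}$-module, generated by a polytabloid; equivalently, since the cellular structure on $kS_n$ used in this paper gives $S^{\nu_i}$ as the dual of James's right Specht module, cyclicity is preserved under the duality that transports between them, so the $S^{\nu_i}$ are cyclic cell modules of $kS_{\mu_i}$.

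Third, having verified both tuples of hypotheses, I would apply Proposition \ref{Theta_cyclic:prop} to conclude that $\Theta^\mu\bigl((\Delta^{\lambda_1},\ldots,\Delta^{\lambda_r}),(S^{\nu_1},\ldots,S^{\nu_r})\bigr)$ is a cyclic $A\wr S_n$-module, generated by the explicit diagram with top-row labels $y_i$ (a generator of $S^{\nu_i}$) and bottom-row labels $x_j$ (a generator of $\Delta^{\lambda_j}$ for the appropriate $j$). Transporting along the isomorphism of the first step gives that $\Delta^{(\nu_1,\ldots,\nu_r)}$ is cyclic. Since every cell module of $A\wr S_n$ is cyclic, $A\wr S_n$ is cyclic cellular, which is the desired conclusion.

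There is essentially no obstacle here — the hard work has been packaged into Proposition \ref{Theta_cyclic:prop} and the identification of the cell modules as $\Theta$-modules. The only point that warrants care is the Specht-module cyclicity, because the paper uses the dual of James's right Specht module as its cell module; but since cyclicity is preserved under contragredient duality (both are quotients of the regular module by an ideal, and a cyclic generator of one corresponds to a cocyclic vector in the other, which in a finite-dimensional module over a finite-dimensional algebra is again cyclic) this causes no difficulty.
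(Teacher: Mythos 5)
Your proposal is correct and follows essentially the same route as the paper, which obtains the result in one line by combining the identification $\Delta^{(\nu_1,\ldots,\nu_r)} \cong \Theta^\mu\bigl((\Delta^{\lambda_1},\ldots,\Delta^{\lambda_r}),(S^{\nu_1},\ldots,S^{\nu_r})\bigr)$ with Proposition \ref{Theta_cyclic:prop} and the cyclicity of Specht modules. One caution: your parenthetical claim that cyclicity is preserved under contragredient duality is false as a general principle (the dual of a cyclic module over a finite-dimensional algebra need not be cyclic; e.g.\ the dual of the regular module of $k[x,y]/(x,y)^2$ requires two generators), so you should instead justify the cyclicity of the cell modules $S^{\nu}$ directly, e.g.\ via the Murphy basis or via the standard isomorphism of the dual Specht module with a sign twist of a Specht module --- the paper itself simply asserts this standard fact.
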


Now by equation \eqref{mult_in_a_layer:eq}, we know that the multiplication within each layer of $A{\wr}S_n$ is determined by a bilinear form,
$\psi_\mu$. Let $(U_1,\ldots,U_n),(W_1,\ldots,W_n)$ be half diagrams in $\mathcal{V}_\mu$, so that
$u = C_{U_1} \otimes \cdots \otimes C_{U_n}$ and $w = C_{W_1} \otimes\cdots\otimes C_{W_n}$ are pure tensors in $V_\mu$.
Now by equation \eqref{mult_in_a_layer:eq},
\begin{equation}\label{calc_of_bilin_form_first:eq}
 (u \otimes e \otimes u)(w \otimes e \otimes w) \equiv u \otimes \psi_\mu(u,w) \otimes w
\end{equation}
modulo lower layers. The element $u \otimes e \otimes u$ of $A{\wr}S_n$ is represented by the diagram
\[\begin{tikzpicture}[line width=0.5pt]
	%top row
	\node at (1,2){$U_1$};
	\node at (2,2){$U_2$};
	\node at (3,2){$\cdots$};
	\node at (4,2){$U_n$};
	%bottom row
	\node at (1,1){$U_1$};
	\node at (2,1){$U_2$};
	\node at (3,1){$\cdots$};
	\node at (4,1){$U_n$};
	%lines "top to bottom"
	\draw (1,1.8) to (1,1.2);
	\draw (2,1.8) to (2,1.2);
	\draw (4,1.8) to (4,1.2);

	\node at (5.5,1.5){$=$};

	%top row
	\node[dot] at (7,2){};
	\node[dot] at (8.1,2){};
	\node at (9.2,2){$\cdots$};
	\node[dot] at (10.3,2){};
	%bottom row
	\node at (7,1){$C_{U_1,U_1}$};
	\node at (8.1,1){$C_{U_2,U_2}$};
	\node at (9.2,1){$\cdots$};
	\node at (10.3,1){$C_{U_n,U_n}$};
	%lines "top to bottom"
	\draw (7,1.8) to (7,1.2);
	\draw (8.1,1.8) to (8.1,1.2);
	\draw (10.3,1.8) to (10.3,1.2);

\end{tikzpicture}\]
and of course the element $w \otimes e \otimes w$ is represented by a diagram which is the same except that each $U$ is replaced with a $W$. Thus we
find by concatenating and simplifying these diagrams that the product $(u \otimes e \otimes u)(w \otimes e \otimes w)$ corresponds to
\begin{equation}\label{bilin_form_diag_CUU_CVV}\begin{tikzpicture}[line width=0.5pt,xscale=2]
	%top row
	\node[dot] at (1,2){};
	\node[dot] at (2,2){};
	\node at (3,2){$\cdots$};
	\node[dot] at (4,2){};
	%bottom row
	\node at (1,1){$C_{U_1,U_1}C_{W_1,W_1}$};
	\node at (2,1){\hspace{2em}$C_{U_2,U_2}C_{W_2,W_2}$};
	\node at (3,1){$\cdots$};
	\node at (4,1){$C_{U_n,U_n}C_{W_n,W_n}$};
	%lines "top to bottom"
	\draw (1,1.8) to (1,1.2);
	\draw (2,1.8) to (2,1.2);
	\draw (4,1.8) to (4,1.2);
\end{tikzpicture}.\end{equation}
We may expand each of the products $C_{U_i,U_i}C_{W_i,W_i}$ in terms of the cellular basis of $A$ and use these expansions to write
\eqref{bilin_form_diag_CUU_CVV} as a linear combination of diagrams of the form
\[\begin{tikzpicture}[line width=0.5pt]
	%top row
	\node[dot] at (1,2){};
	\node[dot] at (2,2){};
	\node at (3,2){$\,\cdots$};
	\node[dot] at (4,2){};
	%bottom row
	\node at (1,1){$C_{X_1,Y_1}$};
	\node at (2,1){\hspace{1em}$C_{X_2,Y_2}$};
	\node at (3,1){$\,\cdots$};
	\node at (4,1){$C_{X_n,Y_n}$};
	%lines "top to bottom"
	\draw (1,1.8) to (1,1.2);
	\draw (2,1.8) to (2,1.2);
	\draw (4,1.8) to (4,1.2);
\end{tikzpicture}.\]
Further, from the fact that any product $C^\delta_{X,Y}C^\epsilon_{U,W}$ of cellular basis elements is a linear combination of elements
$C^\theta_{S,T}$ for $\theta$ equal to or less than both $\delta$ and $\epsilon$, we may easily see that all such diagrams have layer index
at most $\mu$, and moreover if for any $i$ we have that  $U_i$ and $W_i$ do not lie in the same set $M(\lambda)$, then all of the diagrams in the
expansion have layer index strictly less than $\mu$, and hence by \eqref{calc_of_bilin_form_first:eq} we see that we must have $\psi_\mu(u,w) = 0$ in
this case. Suppose now that $U_i$ and $W_i$ do indeed lie in the same set $M(\lambda)$ for each $i$; by (2.4.1) in \cite{GLCA}, we know that
$C_{U_i,U_i}C_{W_i,W_i}$ is congruent to $\langle C_{U_i},C_{W_i} \rangle C_{U_i,W_i}$ modulo cellular basis elements of lower cell index, where
$\langle \cdot,\cdot \rangle$ is the appropriate cell form. Thus we see that \eqref{bilin_form_diag_CUU_CVV} is congruent modulo lower layers to
\[\begin{tikzpicture}[line width=0.5pt]
	\node at (-2.2,1.5) {$\langle C_{U_1},C_{W_1} \rangle\langle C_{U_2},C_{W_2} \rangle \cdots \langle C_{U_n},C_{W_n} \rangle$};
	%top row
	\node at (1,2){$U_1$};
	\node at (2,2){$U_2$};
	\node at (3,2){$\cdots$};
	\node at (4,2){$U_n$};
	%bottom row
	\node at (1,1){$W_1$};
	\node at (2,1){$W_2$};
	\node at (3,1){$\cdots$};
	\node at (4,1){$W_n$};
	%lines "top to bottom"
	\draw (1,1.8) to (1,1.2);
	\draw (2,1.8) to (2,1.2);
	\draw (4,1.8) to (4,1.2);
\end{tikzpicture},\]
which represents the element
$\langle C_{U_1},C_{W_1} \rangle\langle C_{U_2},C_{W_2} \rangle \cdots \langle C_{U_n},C_{W_n} \rangle \, u \otimes e \otimes w$, and hence we find
that in this case
\[\psi_\mu(u,w) = \langle C_{U_1},C_{W_1} \rangle\langle C_{U_2},C_{W_2} \rangle \cdots \langle C_{U_n},C_{W_n} \rangle.\]
Note in particular that $\psi_\mu$ is thus in all cases $k$-valued. We can now use these values for $\psi_\mu$, together with equation
\eqref{bilin_form:eq} and Proposition \ref{ten_prod_sn:prop} to compute the values of the cell form on the cell module
$\Delta^{(\nu_1,\ldots,\nu_r)}$; indeed, if $y_1\otimes\cdots\otimes y_r\otimes u_1\otimes \cdots \otimes u_n$ and
$z_1\otimes\cdots\otimes z_r\otimes w_1\otimes \cdots \otimes w_n$
are pure tensors in the cell module $\Delta^{(\nu_1,\ldots,\nu_r)}$, then we see that
\begin{multline}\label{cell_form_within_omega}
\langle y_1\otimes\cdots\otimes y_r\otimes u_1\otimes \cdots \otimes u_n,\;z_1\otimes\cdots\otimes z_r\otimes w_1\otimes \cdots \otimes w_n \rangle =\\
\langle y_1,z_1 \rangle \cdots  \langle y_r,z_r \rangle  \langle u_1,w_1 \rangle \cdots  \langle u_n,w_n \rangle
\end{multline}
if $u_i$ and $w_i$ lie in the same $\Delta(\lambda)$ for each $i=1,\ldots,n$, and
\begin{equation}\label{cell_form_between_omegas}
\langle y_1\otimes\cdots\otimes y_r\otimes u_1\otimes \cdots \otimes u_n,\;z_1\otimes\cdots\otimes z_r\otimes w_1\otimes \cdots \otimes w_n \rangle = 0
\end{equation}
otherwise.

Next we seek to describe the cell radical of $\Delta^{(\nu_1,\ldots,\nu_r)}$. Using \eqref{cell_mod_vecsp_ident:eq} and
\eqref{v_mu_id:eq}, we have isomorphisms of $k$-vector spaces
\begin{align}\begin{split}\label{vsp_ident_cell_ulnu:eq}
\Delta^{(\nu_1,\ldots,\nu_r)} &\cong S^{\nu_1}\otimes\cdots\otimes S^{\nu_r} \otimes V_\mu\\
&\cong \bigoplus_{\gamma \in \mathcal{R}_\mu} S^{\nu_1}\otimes\cdots\otimes S^{\nu_r} \otimes \Delta(\alpha_{(1)\gamma^{-1}})\otimes\cdots\otimes
\Delta(\alpha_{(n)\gamma^{-1}}).
\end{split}\end{align}
For $\gamma \in \mathcal{R}_\mu$, let
$\Omega_\gamma = S^{\nu_1}\otimes\cdots\otimes S^{\nu_r} \otimes \Delta(\alpha_{(1)\gamma^{-1}})\otimes\cdots\otimes
\Delta(\alpha_{(n)\gamma^{-1}})$. Now we see from \eqref{cell_form_between_omegas} that if $\gamma,\beta$
are distinct elements of $\mathcal{R}_\mu$ and $u \in \Omega_\gamma$, $w \in \Omega_\beta$ then $\langle u,w \rangle = 0$. It follows that, if we let
$R_\gamma$ be the radical of the restriction to $\Omega_\gamma$ of $\langle \cdot,\cdot \rangle$, then the cell radical of
$\Delta^{(\nu_1,\ldots,\nu_r)}$ is $\bigoplus_{\gamma \in \mathcal{R}_\mu}R_\gamma$.

Let us fix a basis in each $\Delta^\lambda$
and each $S^\nu$; from these bases we obtain a basis of pure tensors in each $\Omega_\gamma$. Let $G_{\nu_i}$ be the Gram matrix of the
cell form of $S^{\nu_i}$ and $G_{\alpha_i}$ be the Gram matrix of the cell form of $\Delta^{\alpha_i}$, with respect to our chosen bases. 
If we let $B_\gamma$ be the Gram matrix of the restriction of the cell form to $\Omega_\gamma$ with respect to our basis, then we see by
\eqref{cell_form_within_omega} that $B_\gamma$ is the matrix Kronecker product
$G_{\nu_1}\otimes\cdots\otimes G_{\nu_r}\otimes G_{\alpha_{(1)\gamma^{-1}}}\otimes\cdots\otimes G_{\alpha_{(n)\gamma^{-1}}}$.
By fixing some total order on the set $R_\gamma$ and concatenating our bases of the $\Omega_\gamma$ in this order, we obtain a basis of
$\Delta^{(\nu_1,\ldots,\nu_r)}$; using \eqref{cell_form_between_omegas}, we see that its Gram matrix with
respect to this basis is of block diagonal form with diagonal blocks $B_\gamma$ for $\gamma \in \mathcal{R}_\mu$. From this we see (using the fact that
the rank of the Kronecker product of two matrices is the product of their ranks) that the rank of the cell form on $\Delta^{(\nu_1,\ldots,\nu_r)}$
is $|\mathcal{R}_\mu|$ times the product of the ranks of the cell forms of the cell modules
$S^{\nu_1},\ldots,S^{\nu_r},\Delta^{\alpha_1},\ldots,\Delta^{\alpha_n}$.

Now in constructing the above basis of pure tensors for $\Delta^{(\nu_1,\ldots,\nu_r)}$ as above, we may choose our basis of each cell module of $A$
and $kS_n$ by taking a basis of the cell radical and extending this to a basis of the whole cell module. If we do this, then we see that an element
$y_1\otimes\cdots\otimes y_r\otimes u_1\otimes \cdots \otimes u_n$ of the basis of pure tensors for $\Delta^{(\nu_1,\ldots,\nu_r)}$ must lie in the
cell radical if any $y_i$ or $u_i$ is an element of the cell radical of the cell module in which it lies. By the above calculation of the rank of the
cell form on $\Delta^{(\nu_1,\ldots,\nu_r)}$, we see that the number of such elements must be equal to the dimension of the cell radical, and so we
have now found a basis of the cell radical inside a basis of the whole cell module.

We can now use the theory of cellular algebras from section 3 of \cite{GLCA} together with our basis of $\Delta^{(\nu_1,\ldots,\nu_r)}$
to deduce some results about the simple modules $L^{(\nu_1,\ldots,\nu_r)}$ and semisimplicity of $A{\wr}S_n$. These results are already known for
wreath products $A{\wr}S_n$
with $A$ a general (i.e. not cellular) algebra given extra assumptions on the field (see for example \cite[Lemma 3.4]{CHTAN}), and in particular for
the case $k\bigl(G{\wr}S_n\bigr) \cong (kG){\wr}S_n$ where $G$ is a finite group (see for example Chapter 4 of \cite{JAMKER} for the case where
the field is algebraically closed). However, if $A$ is cellular then our work shows that these results hold \emph{with no restriction on the field at
all}. Given the importance of cellular algebras in certain areas of representation theory we are confident that they will prove useful.

Recall that $\Lambda_0$ indexes the simple modules of $A$. Let $\bigl(\ul{\Lambda}^r_n\bigr)_0$
denote the set of elements $(\nu_1,\ldots,\nu_r) \in \ul{\Lambda}^r_n$ such that the cell radical of $\Delta^{(\nu_1,\ldots,\nu_r)}$ is a proper
submodule of $\Delta^{(\nu_1,\ldots,\nu_r)}$, so that $\bigl(\ul{\Lambda}^r_n\bigr)_0$ indexes the simple modules of $A{\wr}S_n$. Recall that our
field $k$ has characteristic $p$, which may be zero or a prime.

\begin{theorem}\label{label_simples:thm} The set $\bigl(\ul{\Lambda}^r_n\bigr)_0$ indexing the simple modules of $A{\wr}S_n$ consists exactly of those
$(\nu_1,\ldots,\nu_r) \in \ul{\Lambda}^r_n$ such that $\nu_i = ()$ whenever $\lambda_i \in \Lambda\setminus\Lambda_0$ and all $\nu_i$
are $p$-restricted (recall that $()$ is $p$-restricted for any $p$).
\end{theorem}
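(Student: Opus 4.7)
The plan is to combine the rank computation of the cell form on $\Delta^{(\nu_1,\ldots,\nu_r)}$ established immediately before the theorem with the standard characterization (from \cite{GLCA}) that $(\nu_1,\ldots,\nu_r) \in \bigl(\ul{\Lambda}^r_n\bigr)_0$ precisely when the cell form on $\Delta^{(\nu_1,\ldots,\nu_r)}$ is not identically zero, i.e.\ has positive rank.

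First, I would invoke the formula obtained above: the rank of the cell form on $\Delta^{(\nu_1,\ldots,\nu_r)}$ equals $|\mathcal{R}_\mu|$ times the product of the ranks of the cell forms on $S^{\nu_1},\ldots,S^{\nu_r}$ and on $\Delta^{\alpha_1},\ldots,\Delta^{\alpha_n}$, where the tuple $(\alpha_1,\ldots,\alpha_n)$ is the one with $\mu_i$ copies of $\lambda_i$. Since $|\mathcal{R}_\mu| \geqslant 1$, the cell form on $\Delta^{(\nu_1,\ldots,\nu_r)}$ is nonzero if and only if every factor in this product is nonzero.

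Second, I would translate each factor into a condition on $\nu_i$ or $\lambda_i$. For the Specht factors, by the cellular structure of $kS_{\mu_i}$ recalled in Section~2.2, the cell form on $S^{\nu_i}$ is nonzero if and only if $\nu_i$ is $p$-restricted; in particular, since $()$ is $p$-restricted for all $p$, this correctly handles the case $\mu_i = 0$ where $S^{\nu_i} = S^{()}$ is one-dimensional with nondegenerate form. For the $A$-factors, by the very definition of $\Lambda_0$, the cell form on $\Delta^{\alpha_j}$ is nonzero if and only if $\alpha_j \in \Lambda_0$. Since $\alpha_j$ ranges over each $\lambda_i$ with multiplicity $\mu_i = |\nu_i|$, the conjunction of these conditions as $j$ runs over $1,\ldots,n$ is equivalent to demanding that for every $i$ with $\nu_i \ne ()$ one has $\lambda_i \in \Lambda_0$, i.e.\ $\nu_i = ()$ whenever $\lambda_i \in \Lambda \setminus \Lambda_0$.

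Assembling these two conditions yields exactly the characterization in the theorem. No step here is a serious obstacle, since the rank formula, the identification of $p$-restricted partitions as the labels of simples for symmetric groups, and the characterization of $\Lambda_0$ via the cell form are already in hand; the proof is essentially a matter of carefully tracking which factors in the Kronecker product rank formula can vanish.
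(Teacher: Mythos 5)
Your proposal is correct and follows exactly the route the paper intends: the theorem is deduced from the rank formula for the cell form on $\Delta^{(\nu_1,\ldots,\nu_r)}$ (the factor $|\mathcal{R}_\mu|$ times the product of the ranks of the forms on $S^{\nu_1},\ldots,S^{\nu_r},\Delta^{\alpha_1},\ldots,\Delta^{\alpha_n}$) combined with the facts that a cell index is in $\Lambda_0$ precisely when its cell form is nonzero and that $S^{\nu}$ has nonzero cell form precisely when $\nu$ is $p$-restricted. The paper leaves this deduction implicit, and your write-up supplies it correctly, including the edge case $\mu_i=0$.
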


In light of Theorem \ref{label_simples:thm}, we see that if we let $s$ be the number of simple modules of $A$ and we let
$\hat\lambda_1 > \hat\lambda_2 > \cdots > \hat\lambda_s$ be the elements of $\Lambda_0$, then the simples of $A{\wr}S_n$ may in fact be indexed by the
set $\ul{\Lambda}^s_n(p)$ consisting of all length $s$ multipartitions of $n$ with $p$-restricted entries (compare \cite[Proposition 3.7]{CHTAN}).

\begin{theorem}Let $(\nu_1,\ldots,\nu_r) \in \bigl(\ul{\Lambda}^r_n\bigr)_0$. Then corresponding to the isomorphism
\eqref{vsp_ident_cell_ulnu:eq}, we have an isomorphism of $k$-vector spaces
\[
L^{(\nu_1,\ldots,\nu_r)} \cong \bigoplus_{\gamma \in \mathcal{R}_\mu} D^{\nu_1}\otimes\cdots\otimes D^{\nu_r} \otimes
L^{\alpha_{(1)\gamma^{-1}}}\otimes\cdots\otimes L^{\alpha_{(n)\gamma^{-1}}}.
\]
Moreover, $L^{(\nu_1,\ldots,\nu_r)}$ has a representation by diagrams of the form \eqref{elt_of_Theta_mu_X_Y:diag} in exactly the same way as
$\Delta^{(\nu_1,\ldots,\nu_r)}$, by simply using elements of $D^{\nu_i}$ rather than $S^{\nu_i}$ and elements of  $L^{\alpha_i}$ rather
than $\Delta^{\alpha_i}$; the action on such diagrams is exactly the same as described above. We thus see that $L^{(\nu_1,\ldots,\nu_r)}$ is isomorphic
as an $A{\wr}S_n$-module to $\Theta^{\mu}\bigl((L^{\lambda^1},\ldots,L^{\lambda^r}),(D^{\nu_1},\ldots,D^{\nu_r})\bigr)$, where
$\mu = (|\nu_1|,\ldots,|\nu_r|)$ (a composition of $n$), and for convenience we let $L^\lambda=0$ for $\lambda \in \Lambda\setminus\Lambda_0$.
\end{theorem}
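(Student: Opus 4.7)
The plan is to obtain $L^{(\nu_1,\ldots,\nu_r)}$ as the quotient of $\Delta^{(\nu_1,\ldots,\nu_r)}$ by its cell radical, using the detailed structural description already assembled just before the statement. Recall that we have a vector space decomposition $\Delta^{(\nu_1,\ldots,\nu_r)} = \bigoplus_{\gamma\in\mathcal{R}_\mu}\Omega_\gamma$, that the cell form vanishes between distinct summands (equation \eqref{cell_form_between_omegas}), so that the cell radical splits as $\bigoplus_{\gamma}R_\gamma$, and that the rank computation via the Kronecker product structure produced an explicit basis of $R_\gamma$ consisting of those pure tensors $y_1\otimes\cdots\otimes y_r\otimes u_1\otimes\cdots\otimes u_n\in\Omega_\gamma$ having at least one tensor factor in the cell radical of its respective cell module. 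Quotienting $\Omega_\gamma$ by $R_\gamma$ therefore yields $D^{\nu_1}\otimes\cdots\otimes D^{\nu_r}\otimes L^{\alpha_{(1)\gamma^{-1}}}\otimes\cdots\otimes L^{\alpha_{(n)\gamma^{-1}}}$ (with the convention $L^\lambda = 0$ for $\lambda\in\Lambda\setminus\Lambda_0$, which is automatically compatible with the $p$-restrictedness condition in Theorem \ref{label_simples:thm}), and summing over $\gamma\in\mathcal{R}_\mu$ produces the claimed vector space isomorphism.

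For the $A{\wr}S_n$-module structure, I would invoke the identification $\Delta^{(\nu_1,\ldots,\nu_r)}\cong\Theta^\mu\bigl((\Delta^{\lambda_1},\ldots,\Delta^{\lambda_r}),(S^{\nu_1},\ldots,S^{\nu_r})\bigr)$ established earlier in this section. Inspection of the action formula \eqref{wr_mod_full_action:eq} shows that $A{\wr}S_n$ operates on a pure tensor in a $\Theta^\mu$ module by acting separately on each $S^{\nu_i}$-factor through the corresponding piece of the Young subgroup and on each $\Delta^{\lambda_j}$-factor through $A$, combined with a permutation of the coset representative in $\mathcal{R}_\mu$. Consequently there is an evident surjection of $A{\wr}S_n$-modules $\Theta^\mu\bigl((\Delta^{\lambda_i}),(S^{\nu_i})\bigr)\twoheadrightarrow\Theta^\mu\bigl((L^{\lambda_i}),(D^{\nu_i})\bigr)$ obtained by applying the simple-quotient map on each tensor factor; the map is well-defined because the $A$-action descends to $L^{\lambda_j}$ and the $kS_{\mu_i}$-action descends to $D^{\nu_i}$.

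It remains to verify that the kernel of this surjection is precisely the cell radical of $\Delta^{(\nu_1,\ldots,\nu_r)}$. But by the standard fact that the kernel of a tensor product of quotient maps is spanned by pure tensors with at least one factor in the corresponding kernel, and that under the identification \eqref{vsp_ident_cell_ulnu:eq} the $\gamma$-summands correspond to the $\Omega_\gamma$, this kernel is exactly the subspace we just identified as the cell radical. Hence the quotient is both $L^{(\nu_1,\ldots,\nu_r)}$ and $\Theta^\mu\bigl((L^{\lambda^1},\ldots,L^{\lambda^r}),(D^{\nu_1},\ldots,D^{\nu_r})\bigr)$, so they are isomorphic as $A{\wr}S_n$-modules; the diagrammatic representation of $L^{(\nu_1,\ldots,\nu_r)}$ is then inherited from the diagrammatic representation of $\Theta^\mu$ given in section \ref{wr_prd_alg:sec}.

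The principal obstacle is bookkeeping: tracking the chain of identifications of $\Delta^{(\nu_1,\ldots,\nu_r)}$ with $\bigoplus_\gamma\Omega_\gamma$ and with $\Theta^\mu\bigl((\Delta^{\lambda_i}),(S^{\nu_i})\bigr)$ and checking that the two natural descriptions of the cell radical, namely as $\bigoplus_\gamma R_\gamma$ on one side and as the kernel of the factorwise quotient on the $\Theta^\mu$ side, really coincide under the given isomorphism. Once these identifications are made explicit, essentially no further computation is required, since the dimension count already carried out in the preceding discussion guarantees the subspaces have the same dimension and the containment in one direction is immediate.
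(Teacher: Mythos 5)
Your proposal is correct and follows essentially the same route as the paper, which states this theorem as a direct consequence of the preceding discussion: the decomposition of the cell radical as $\bigoplus_{\gamma}R_\gamma$, the basis of the radical consisting of pure tensors with a factor in some cell radical (justified by the containment-plus-rank count you cite), and the identification of $\Delta^{(\nu_1,\ldots,\nu_r)}$ with $\Theta^\mu\bigl((\Delta^{\lambda_i}),(S^{\nu_i})\bigr)$ together with the factorwise quotient map. Nothing essential is missing.
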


We thus see that if we index the simples by $\ul{\Lambda}^s_n(p)$ as above, then the simple indexed by $(\hat\nu_1,\ldots,\hat\nu_s)$
(where each $\hat\nu_i$ is thus a $p$-restricted partition) is isomorphic to
$\Theta^{\hat\mu}\bigl((\Delta^{\hat\lambda^1},\ldots,\Delta^{\hat\lambda^s}),(S^{\hat\nu_1},\ldots,S^{\hat\nu_s})\bigr)$,
where $\hat\mu = (|\hat\nu_1|,\ldots,|\hat\nu_s|)$.

\begin{theorem}Let $(\nu_1,\ldots,\nu_r) \in \bigl(\ul{\Lambda}^r_n\bigr)_0$. Then we have
$L^{(\nu_1,\ldots,\nu_r)} \cong \Delta^{(\nu_1,\ldots,\nu_r)}$
if and only if $D^{\nu_i} \cong S^{\nu_i}$ for each $i=1,\ldots,r$ and whenever we have $\nu_i \neq ()$ we have
$L^{\lambda_i} \cong \Delta^{\lambda_i}$.
\end{theorem}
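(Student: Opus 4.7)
The plan is to reduce the question of whether $L^{(\nu_1,\ldots,\nu_r)} \cong \Delta^{(\nu_1,\ldots,\nu_r)}$ to the non-degeneracy of the cell form on $\Delta^{(\nu_1,\ldots,\nu_r)}$, and then to exploit the Kronecker product structure of the Gram matrix computed in the discussion preceding the theorem. By the general theory of cellular algebras (\cite[Section 3]{GLCA}), $L^{(\nu_1,\ldots,\nu_r)}$ equals $\Delta^{(\nu_1,\ldots,\nu_r)}$ precisely when the cell radical of $\Delta^{(\nu_1,\ldots,\nu_r)}$ is zero, equivalently when its cell form has full rank, i.e.\ rank equal to $\dim \Delta^{(\nu_1,\ldots,\nu_r)}$.

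First I would write down both numbers explicitly using \eqref{vsp_ident_cell_ulnu:eq}:
\[
 \dim \Delta^{(\nu_1,\ldots,\nu_r)} = |\mathcal{R}_\mu|\cdot\prod_{i=1}^{r}\dim S^{\nu_i}\cdot\prod_{j=1}^{n}\dim\Delta^{\alpha_j},
\]
and, from the computation at the end of the previous section showing that each block $B_\gamma$ is a matrix Kronecker product of the Gram matrices $G_{\nu_i}$ and $G_{\alpha_{(j)\gamma^{-1}}}$,
\[
 \mathrm{rank}\langle\cdot,\cdot\rangle = |\mathcal{R}_\mu|\cdot\prod_{i=1}^{r}\mathrm{rank}\,G_{\nu_i}\cdot\prod_{j=1}^{n}\mathrm{rank}\,G_{\alpha_j}.
\]
Since $\mathrm{rank}\,G_{\nu_i}\leqslant\dim S^{\nu_i}$ and $\mathrm{rank}\,G_{\alpha_j}\leqslant\dim\Delta^{\alpha_j}$, equality of the two products forces each individual factor to be equal. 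Thus full rank of the cell form on $\Delta^{(\nu_1,\ldots,\nu_r)}$ holds if and only if the cell form of every $S^{\nu_i}$ is non-degenerate and the cell form of every $\Delta^{\alpha_j}$ is non-degenerate.

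Next I would translate these non-degeneracy statements into the module-theoretic conditions in the theorem: for the symmetric group factors, non-degeneracy of the cell form of $S^{\nu_i}$ is equivalent, by the cellular algebra theory, to $D^{\nu_i}\cong S^{\nu_i}$ (in particular $\nu_i$ must be $p$-restricted, which is already guaranteed by $(\nu_1,\ldots,\nu_r)\in(\ul{\Lambda}^r_n)_0$ and Theorem \ref{label_simples:thm}); for the $A$-factors, non-degeneracy of the cell form on $\Delta^{\alpha_j}$ is equivalent to $L^{\alpha_j}\cong\Delta^{\alpha_j}$.

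The final and only slightly delicate point is bookkeeping with the tuple $(\alpha_1,\ldots,\alpha_n)$. By its definition, $\lambda_i$ appears among the $\alpha_j$ exactly $\mu_i=|\nu_i|$ times, so it occurs if and only if $\nu_i\neq()$. Hence the conjunction $L^{\alpha_j}\cong\Delta^{\alpha_j}$ for all $j=1,\ldots,n$ is precisely the condition that $L^{\lambda_i}\cong\Delta^{\lambda_i}$ for every $i$ with $\nu_i\neq()$; no condition is imposed on $\lambda_i$ when $\nu_i=()$, exactly as stated. Combining this with the Specht-module condition completes the equivalence. I do not expect any genuine obstacle here: everything reduces to the rank computation already carried out, plus the observation that equality in a product of bounded quantities forces equality in each factor.
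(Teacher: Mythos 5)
Your proposal is correct and follows essentially the same route the paper intends: the theorem is meant to be read off from the preceding computation of the Gram matrix of $\Delta^{(\nu_1,\ldots,\nu_r)}$ as a block-diagonal matrix of Kronecker products, whence the rank equals the dimension precisely when each factor $G_{\nu_i}$ and $G_{\alpha_j}$ is nonsingular, and your bookkeeping observation that $\lambda_i$ occurs among the $\alpha_j$ exactly when $\nu_i\neq()$ (together with $D^{()}\cong S^{()}$ being automatic) is exactly the right way to finish.
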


Our final result is a criterion for semisimplicity; compare \cite[Lemma 3.5]{CHTAN}. 

\begin{theorem}If $A$ is a cellular algebra, then $A{\wr}S_n$ is semisimple if and only if both $kS_n$ and $A$ are semisimple.
\end{theorem}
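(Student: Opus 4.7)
The plan is to combine the Gram matrix analysis already carried out above with the standard criterion (going back to \cite{GLCA}) that a cellular algebra is semisimple if and only if the cell form on each of its cell modules is non-degenerate. Recall that the Gram matrix of the cell form on $\Delta^{(\nu_1,\ldots,\nu_r)}$ was exhibited above as a block diagonal matrix whose diagonal blocks are the Kronecker products
\[B_\gamma = G_{\nu_1}\otimes\cdots\otimes G_{\nu_r}\otimes G_{\alpha_{(1)\gamma^{-1}}}\otimes\cdots\otimes G_{\alpha_{(n)\gamma^{-1}}},\qquad \gamma \in \mathcal{R}_\mu.\]
Since the rank of a Kronecker product is the product of the ranks of its factors, $B_\gamma$ is non-degenerate if and only if each constituent $G_{\nu_i}$ and $G_{\alpha_j}$ is non-degenerate. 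Thus, semisimplicity of $A\wr S_n$ is equivalent to the simultaneous non-degeneracy of the cell forms on every $\Delta^\lambda$ (for $\lambda\in\Lambda$) and on every Specht module $S^\nu$ (for every partition $\nu$ of every $m\leqslant n$).

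For the backward direction, assume both $A$ and $kS_n$ are semisimple. Semisimplicity of $kS_n$ is equivalent to $\mathrm{char}(k)\nmid n!$, and since $\mu_i!$ divides $n!$ for any part of any composition of $n$, Maschke's theorem gives semisimplicity of every $kS_{\mu_i}$; hence every $S^{\nu_i}$ has non-degenerate cell form. Similarly, semisimplicity of $A$ forces the cell form on every $\Delta^\lambda$ to be non-degenerate. Each factor appearing in each $B_\gamma$ is thus invertible, every $B_\gamma$ is invertible, and the cell form on every cell module of $A\wr S_n$ is non-degenerate.

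For the forward direction, suppose $A\wr S_n$ is semisimple, so every $B_\gamma$ above is non-degenerate. To recover semisimplicity of $A$, I fix $i\in\{1,\ldots,r\}$ and take the multipartition with $\nu_i=(n)$ and $\nu_j=()$ for $j\neq i$; then $\mu$ has $n$ in position $i$ and zero elsewhere, so $\mathcal{R}_\mu=\{e\}$ and each $\alpha_j$ equals $\lambda_i$. The single block $B_e$ is then the Kronecker product of $G_{\lambda_i}^{\otimes n}$ with the $1\times 1$ (and trivially non-degenerate) matrices $G_{()}$ and $G_{(n)}$, so its invertibility forces $G_{\lambda_i}$ itself to be invertible. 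Running $i$ over $\{1,\ldots,r\}$ shows $A$ is semisimple. By a symmetric choice, namely the multipartition $(\nu,(),\ldots,())$ for any partition $\nu$ of $n$, the sole block $B_e$ becomes $G_\nu$ tensored with non-degenerate factors; its invertibility forces $G_\nu$ to be non-degenerate, establishing semisimplicity of $kS_n$.

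There is no real obstacle: all the substantive work lies in the Gram matrix calculation already in place, and the rest is a matter of selecting multipartitions that isolate one factor of the Kronecker product at a time. The only point requiring a moment's thought is the harmless fact that the $1\times 1$ Gram matrices $G_{()}$ and $G_{(n)}$ are non-zero (both $()$ and $(n)$ are $p$-restricted, hence lie in $\Lambda_0$ of the relevant symmetric group, so their cell forms are not identically zero), which ensures that isolating the factor of interest in $B_e$ is legitimate.
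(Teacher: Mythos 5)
Your argument is essentially the one the paper intends: the theorem is meant to follow from the Gram-matrix computation (block diagonal with Kronecker-product blocks, so the rank of the cell form is the product of the ranks of the constituent forms) combined with the Graham--Lehrer criterion that a cellular algebra is semisimple iff every cell form is non-degenerate, and your choice of multipartitions to isolate each factor is the natural way to run the converse. One correction: your parenthetical claim that $(n)$ is $p$-restricted is false whenever $0<p\leqslant n$ (the difference between the first part $n$ and the second part $0$ is $n$), and indeed $G_{(n)}$ is the zero $1\times 1$ matrix in that case; however, this does not damage the proof, since the multiplicativity of rank under Kronecker products means that invertibility of $B_e$ already forces \emph{every} factor, including $G_{(n)}$, to have full rank, so no a priori non-vanishing of the $1\times1$ factors is needed.
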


\end{document}